\documentclass[leqno,a4paper,10pt]{amsart}
\usepackage{amsmath,amsthm,amssymb,amsfonts,enumerate,color}
\allowdisplaybreaks[4]

\newtheorem{thm}{Theorem}[section]
\newtheorem{prop}[thm]{Proposition}
\newtheorem{cor}[thm]{Corollary}
\newtheorem{lem}[thm]{Lemma}
\newtheorem{defi}[thm]{Definition}
\newtheorem{remark}[thm]{Remark}
\newtheorem{example}[thm]{Example}
\newtheorem{pb}[thm]{Problem}

\newtheorem{theoA}{Theorem}

\newtheorem{remA}[theoA]{Remark}

\newenvironment{definition}{\begin{defi}\rm}{\end{defi}}

\numberwithin{equation}{section}
\newcommand{\R}{{\mathbb R}}

\newcommand{\norm}[1]{\left\Vert#1\right\Vert}
\newcommand{\abs}[1]{\left\vert#1\right\vert}

\newcommand{\pos}[1]{\left[#1\right]_{+}}

\DeclareMathOperator{\essinf}{ess\,inf}
\newcommand{\BMO}{\text{\rm BMO}}
\newcommand{\BLO}{\text{\rm BLO}}

\begin{document}

\title[Heat semigroup characterization of BLO spaces]
{New characterizations of BLO spaces by heat semigroups  and   applications}

\thanks{{\it 2020 Mathematics Subject Classification:}  42B25, 42B37.}
\thanks{{\it Key words:} BLO space,  heat semigroup, $A_1$ weight, Littlewood-Paley $g$-function.}

\author{Shaohong Liang, Dongyong Yang and Chao Zhang$^\dag$ }

 \address{Shaohong Liang \\Department of Mathematics,\ Zhejiang University of Science and Technology,\ Hangzhou 310023,  P. R. China}
 \email{liangshaohongzk@163.com}

\smallskip

\address{Dongyong Yang \\School of Mathematical Sciences,\ Xiamen University,\ Xiamen 361005,  P. R. China}
 \email{dyyang@xmu.edu.cn}

\smallskip

 \address{Chao Zhang  \\Department of Mathematics,\ Zhejiang University of Science and Technology,\ Hangzhou 310023,  P. R. China}
 \email{zaoyangzhangchao@163.com}

\thanks{The second author is supported by the National Natural Science Foundation of China (No. 12571106) and the Natural Science Foundation of Fujian Province of China(No. 2025J01029). The third author is supported  by the National Natural Science Foundation of China (Grant No.  11971431) and the Natural Science Foundation of Zhejiang Province (Grant No. LY22A010011)}

\thanks{$^\dag$ Corresponding Author.  }

\date{}
\maketitle

\begin{abstract}
In this paper, we give two new characterizations of the bounded lower oscillation (BLO) space by using the Gaussian heat semigroup. Using these characterizations, we prove the boundary lower-envelope estimate of the solutions of the heat equation with BLO boundary data,   a Gaussian John-Nirenberg criterion and a quantitative estimates of the BLO-norm. Also, we reprove the BMO-BLO boundedness of the Littlewood-Paley $g$-function by using the semigroup method.

 \end{abstract}


\vskip 1cm

\section{Introduction} \label{Sec:L2}
The theory of Hardy spaces and BMO spaces on the Euclidean space $\R^n$ plays an important role in various fields of analysis and partial differential equations. It is well known that $H^1(\R^n)$  and BMO($\R^n$) are, respectively, suitable substitutes of $L^1(\R^n)$ and $L^\infty(\R^n)$. For example, it is known that the classical Riesz transforms are not bounded on $L^1(\R^n)$ and $L^\infty(\R^n)$, but bounded on $H^1(\R^n)$  and BMO($\R^n$). In 1980, in order to study the relationship between $A_p$ weights and $\BMO$ functions,  R. Coifman and R. Rochberg in \cite{CR} introduced BLO($\R^n$) space which is a subset of BMO($\R^n$), and   discussed the
relationship between BMO functions and BLO functions. We recall the definition of BLO space as follows.
 \begin{definition}\label{Def:BLO}
 The space ${\rm BLO}(\R^n)$ consists of locally integrable functions $f$ such that
$$\norm{f}_{{\rm BLO}(\R^n)}:=\sup_B {1\over |B|} \int_B [f(x)-\underset{ B} {\essinf} \ f]dx<\infty,$$
where the supremum is taken over all balls $B\subset \R^n$.
 \end{definition}

 By the definition of $\rm{BLO}(\R^n)$ space, we know that it is not a linear space. For example, $f(x)=-\ln |x| \cdot \chi_{\{|x|>0\}}(x)\in \rm{BLO}(\R)$, but $-f\notin {\rm BLO}(\mathbb R)$ since $-f(x)=\ln|x|$ is unbounded below near zero. In fact, we will give an exact proof of this in Section \ref{Sec:4}.
 In \cite{Bennett}, C. Bennett  established another characterization of BLO($\R^n$) space with a variant Hardy-Littlewood maximal function. In recent years, there has been increasing interest in the study of BLO type spaces. Y. Jiang \cite{Jiang} introduced  RBLO space for non-doubling measures $\mu$ and established boundedness of a kind of maximal singular integral operator and the Hardy-Littlewood maximal operator from $L^\infty(\mu)$ to RBLO($\mu)$. L. Tang \cite{Tang} introduced BLO spaces associated with the sections and studied the BLO type estimates for a kind of maximal singular integral operator and the Hardy-Littlewood maximal operator. D. Yang et al \cite{YYZ} studied the space of BLO type associated with Schr\"odinger operators. In \cite{AlmeidaBetancor}, V. Almeida et al. introduced the BLO spaces in the rational Dunkel setting.  The authors in \cite{HWYZ} considered the BMO-BLO boundedness of the differential transforms. Karagulyan proved the boundedness of  the maximal operator on spaces BMO and BLO on an abstract measure space equipped with a ball-basis in \cite{Karagulyan}. For more information and recent development about the BLO spaces, see, for example, \cite{LLW, Popoli, ZL}.

 The heat semigroup associated with the  Laplacian is a basic tool in harmonic analysis.  We use the Gaussian heat kernel
\[
 W_t(x-y)=(4\pi t)^{-n/2}
 \exp\!\left(-\frac{|x-y|^2}{4t}\right),
 \qquad t>0.
\]
Let
\begin{equation}\label{eq:theta}
 \vartheta_n:=\int_{B(0,1)}(4\pi)^{-n/2}e^{-|z|^2/4}\,dz.
\end{equation}
The change of variables $y=x+\sqrt t\,z$ shows that
\[
 \frac1{\vartheta_n}\int_{B(x,\sqrt t)}W_t(x-y)\,dy=1.
\]

Using the heat semigroup and Poisson semigroup to characterize function spaces can be traced to the work of E. Stein and M. Taibleson; see  \cite{SteinTopic, Taibleson1,Taibleson2, Taibleson3}.  In this paper, we will give a new characterization of ${\rm BLO}(\R^n)$ by using the heat semigroup  as follows.

\begin{thm}\label{Thm:heatC1.2}
For $f\in L^1_{\mathrm{loc}}(\R^n)$, the following statements are equivalent:
\begin{enumerate}
 \item[(i)] $f\in\BLO(\R^n)$;
 \item[(ii)]
 \begin{equation}\label{eq:full-condition}
  \sup_{x\in\R^n,\,t>0}
  \int_{\R^n}W_t(x-y)
  \pos{f(y)-\underset{{z\in B(x,\sqrt t)}}\essinf f(z)}\,dy<\infty;
 \end{equation}
 \item[(iii)]
 \begin{equation}\label{eq:local-condition}
  \sup_{x\in\R^n,\,t>0}
  \left[
   \frac1{\vartheta_n}\int_{B(x,\sqrt t)}W_t(x-y)f(y)\,dy
   -\underset{{z\in B(x,\sqrt t)}}\essinf f(z)
  \right]<\infty.
 \end{equation}
\end{enumerate}
Moreover, there are constants $0<c_n\leq C_n<\infty$, depending only on the
dimension, such that
\begin{align}\label{eq:full-equivalence}
 c_n\norm{f}_{\BLO(\R^n)}
 &\leq
 \sup_{x\in\R^n,\,t>0}
 \int_{\R^n}W_t(x-y)
 \pos{f(y)-\underset{{z\in B(x,\sqrt t)}}\essinf f(z)}\,dy
 \leq C_n\norm{f}_{\BLO(\R^n)},
\end{align}
and
\begin{align}\label{eq:local-equivalence}
 e^{-1/4}\norm{f}_{\BLO(\R^n)}
 &\leq
 \sup_{x\in\R^n,\,t>0}
 \left[
  \frac1{\vartheta_n}\int_{B(x,\sqrt t)}W_t(x-y)f(y)\,dy
  -\underset{{z\in B(x,\sqrt t)}}\essinf f(z)
 \right]\leq e^{1/4}\norm{f}_{\BLO(\R^n)}.
\end{align}
\end{thm}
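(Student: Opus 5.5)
The plan is to compare both heat quantities with the ball averages in Definition~\ref{Def:BLO}, using the pointwise bounds for the Gaussian on the ball $B=B(x,\sqrt t)$. For $y\in B$ we have $e^{-1/4}(4\pi t)^{-n/2}\le W_t(x-y)\le (4\pi t)^{-n/2}$. Also $|B|=v_n t^{n/2}$, where $v_n$ is the volume of the unit ball. From \eqref{eq:theta} we get $e^{-1/4}(4\pi)^{-n/2}v_n\le\vartheta_n\le(4\pi)^{-n/2}v_n$. Combining these, for $y\in B$,
\[
 e^{-1/4}\frac1{|B|}\le \frac{W_t(x-y)}{\vartheta_n}\le e^{1/4}\frac1{|B|}.
\]

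\emph{Step 1: (i)$\Leftrightarrow$(iii) and \eqref{eq:local-equivalence}.} Since $\vartheta_n^{-1}\int_B W_t(x-y)\,dy=1$, the bracket in \eqref{eq:local-condition} equals $\vartheta_n^{-1}\int_B W_t(x-y)[f(y)-\essinf_B f]\,dy$. The integrand is nonnegative a.e. on $B$. The two-sided kernel bound above then places this quantity between $e^{-1/4}$ and $e^{1/4}$ times $\frac1{|B|}\int_B[f-\essinf_B f]$. Every ball has the form $B(x,\sqrt t)$, so taking suprema gives \eqref{eq:local-equivalence}.

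\emph{Step 2: lower bound in \eqref{eq:full-equivalence}.} Restrict the integral in \eqref{eq:full-condition} to $B$, where $[\,\cdot\,]_+$ is just $f-\essinf_B f$. The lower kernel bound gives the quantity $\ge e^{-1/4}(4\pi)^{-n/2}v_n\cdot\frac1{|B|}\int_B(f-\essinf_B f)$, which yields $c_n=e^{-1/4}(4\pi)^{-n/2}v_n$.

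\emph{Step 3: upper bound in \eqref{eq:full-equivalence}.} I expect this to be the main obstacle, because it requires control of $f$ far from $B$. Decompose $\R^n=B_0\cup\bigcup_{k\ge1}(B_k\setminus B_{k-1})$ with $B_k=2^kB$. On $B_k\setminus B_{k-1}$ we have $W_t\lesssim t^{-n/2}e^{-4^{k-1}/4}$, so the integral is bounded by
\[
 C\sum_{k\ge0}2^{kn}e^{-c4^k}\frac1{|B_k|}\int_{B_k}[f-\essinf_B f]_+ .
\]
Since $\essinf_B f\ge\essinf_{B_k}f$, we have $[f-\essinf_B f]_+\le (f-\essinf_{B_k}f)+(\essinf_B f-\essinf_{B_k}f)$. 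The first term averages to at most $\|f\|_{\BLO}$. For the second term, I would show $\essinf_{B_{j}}f-\essinf_{B_{j+1}}f\le 2^n\|f\|_{\BLO}$. Indeed, $\essinf_{B_j}f\le \frac1{|B_j|}\int_{B_j}f\le 2^n\frac1{|B_{j+1}|}\int_{B_{j+1}}(f-\essinf_{B_{j+1}}f)+\essinf_{B_{j+1}}f$, where the factor $2^n$ comes from $f-\essinf_{B_{j+1}}f\ge0$ on $B_{j+1}$. Telescoping gives $\essinf_B f-\essinf_{B_k}f\le k2^n\|f\|_{\BLO}$. The series $\sum_k(1+k2^n)2^{kn}e^{-c4^k}$ converges, which gives $C_n$.

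Together, Steps 1--3 prove both equivalences, and (ii)$\Leftrightarrow$(i)$\Leftrightarrow$(iii) follows.
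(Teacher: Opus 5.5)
Your proposal is correct and follows essentially the same route as the paper. The two-sided bound $e^{-1/4}|B|^{-1}\le W_t(x-y)/\vartheta_n\le e^{1/4}|B|^{-1}$ on $B=B(x,\sqrt t)$ gives \eqref{eq:local-equivalence} and the lower half of \eqref{eq:full-equivalence}, and the upper half comes from dyadic annuli with Gaussian decay. The only difference is that your telescoping estimate for $\essinf_B f-\essinf_{B_k}f$ is unnecessary. Since $\essinf_{B_k}f\le\essinf_B f$ and $f-\essinf_{B_k}f\ge 0$ a.e.\ on $B_k$, you get directly $\pos{f-\essinf_B f}\le f-\essinf_{B_k}f$ a.e.\ on $B_k$, which is what the paper uses; your extra nonnegative term is correct but redundant.
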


The positive part and the comparison with the boundary-data infimum are
essential. Consider the symmetric expression
\begin{equation}\label{eq:symmetric-functional}
 \sup_{t>0}
 \left\|W_tf(\cdot)-
 \inf_{z\in B(\cdot,\sqrt t)}W_tf(z)\right\|_{L^\infty(\R^n)}.
\end{equation}
Then, we have the following result for functions in $\BLO(\R^n)$ spaces.

\begin{thm}\label{Thm:heatC}
For every function
$f\in {\rm BLO}(\R^n)$, there exists a constant $C>0$ such that  $$\sup_{t>0}\norm{W_t f(\cdot)-\underset{z\in B(\cdot,\sqrt t)} {\inf} W_tf(z)}_{L^\infty(\R^n)}\le C \norm{f}_{\BLO(\R^n)}. $$
\end{thm}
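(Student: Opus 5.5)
We need to show: for $f\in\BLO$, $t>0$ and a.e. (indeed every) $x$,
\[
W_tf(x)-\inf_{z\in B(x,\sqrt t)}W_tf(z)\le C\norm{f}_{\BLO(\R^n)}.
\]
The plan is to compare both $W_tf(x)$ and $W_tf(z)$ with the single quantity $m:=\essinf_{B(x,2\sqrt t)} f$. Note that $B(x,2\sqrt t)\supset B(z,\sqrt t)$ for every $z\in B(x,\sqrt t)$. It then suffices to prove two estimates:
\[
W_tf(x)-m\le C\norm{f}_{\BLO}
\qquad\text{and}\qquad
W_tf(z)\ge m-C\norm{f}_{\BLO}\quad\text{for all } z\in B(x,\sqrt t).
\]
Subtracting them gives the claim. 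We may assume $W_tf$ is finite, which holds since BLO functions have at most logarithmic growth and hence integrate against Gaussians.

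\textbf{Upper bound.} Since $\int W_t=1$, we have $W_tf(x)-m=\int W_t(x-y)(f(y)-m)\,dy$. Decompose $\R^n$ into $B_0=B(x,2\sqrt t)$ and the annuli $B_k\setminus B_{k-1}$ with $B_k=B(x,2^{k+1}\sqrt t)$. On $B_k\setminus B_{k-1}$ the kernel is at most $C t^{-n/2}e^{-c4^k}$. The integrand $f-m$ may be negative outside $B_0$, but we only need an upper bound, so we may replace it by $(f-m)_+$. Write
\[
f-m=(f-\essinf_{B_k}f)+(\essinf_{B_k}f-m),
\]
where the second term is $\le 0$ because $B_k\supset B_0$. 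Hence $(f-m)_+\le f-\essinf_{B_k}f$ on $B_k$, whose integral over $B_k$ is at most $|B_k|\,\norm{f}_{\BLO}$. Summing gives
\[
\sum_k t^{-n/2}e^{-c4^k}(2^k\sqrt t)^n\,\norm{f}_{\BLO}\le C\norm{f}_{\BLO}.
\]
This is really the upper estimate in \eqref{eq:full-equivalence} of Theorem~\ref{Thm:heatC1.2}, with the ball $B(x,\sqrt t)$ replaced by $B(x,2\sqrt t)$, which only changes constants. Alternatively, we can invoke \eqref{eq:full-equivalence} directly after rescaling $t\mapsto 4t$, where the kernel ratio $W_t\le 2^nW_{4t}$ holds.

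\textbf{Lower bound.} This is the main point, since $f$ may be very negative far away and BLO gives no lower control there. Fix $z\in B(x,\sqrt t)$. Then
\[
W_tf(z)-m=\int W_t(z-y)(f(y)-m)\,dy.
\]
On $B_0$ the integrand is $\ge 0$ by definition of $m$. On $B_k\setminus B_{k-1}$ for $k\ge1$, use
\[
f-m\ge \essinf_{B_k}f-\essinf_{B_0}f.
\]
The key BLO fact is that for concentric balls $B\subset 2B$ one has $\essinf_B f-\essinf_{2B}f\le \frac{1}{|B|}\int_B (f-\essinf_{2B}f)\le 2^n\norm{f}_{\BLO}$. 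Iterating, $m-\essinf_{B_k}f\le 2^nk\norm{f}_{\BLO}$. Hence on the $k$-th annulus $f-m\ge -2^nk\norm{f}_{\BLO}$. Since $|z-y|\ge|y-x|-\sqrt t\gtrsim 2^k\sqrt t$ there, the kernel mass of that annulus is $\lesssim e^{-c4^k}$. Summing $\sum_k k\,e^{-c4^k}<\infty$ gives $W_tf(z)\ge m-C\norm{f}_{\BLO}$.

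Combining the two bounds yields the theorem with $C$ depending only on $n$. Anticipated obstacles are only bookkeeping: justifying the finiteness and integrability of $W_tf$ for $f\in\BLO$ (via the same logarithmic growth bound), and noting that the infimum over $z$ is a true infimum of a continuous function, so the pointwise lower bound for every $z$ suffices.
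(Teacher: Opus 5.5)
Your proof is correct, but it takes a different route from the paper's.

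\textbf{The paper's route.} The paper bounds the two-sided difference $|W_tf(x)-W_tf(z)|$ for $|x-z|\le\sqrt t$. It writes this difference as $\int(W_t(x-y)-W_t(z-y))(f(y)-m)\,dy$, with the same constant $m=\essinf_{2B}f$ you use. It then applies the mean value theorem to the kernel difference and controls $|f-m|$ on $2B$ and on the dyadic annuli by BLO averages over $2^{k+1}B$. The factor $k+1$ in its estimate of $II$ is what is needed to pass from $\essinf_{2B}f$ to $\essinf_{2^{k+1}B}f$. That is precisely your telescoping estimate $\essinf_{B}f-\essinf_{2B}f\le 2^n\norm{f}_{\BLO(\R^n)}$, which the paper uses without stating it. Overall this is the standard kernel-smoothness argument one runs for BMO.

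\textbf{Your route.} You instead sandwich $W_tf(x)$ and $W_tf(z)$ around the single number $m$, using only positivity, unit mass and Gaussian tail decay. The upper half is literally the upper bound in \eqref{eq:full-equivalence} of Theorem~\ref{Thm:heatC1.2} at time $4t$, via $W_t\le 2^nW_{4t}$. The lower half is where the one-sided nature of BLO enters.

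\textbf{What each approach buys.} Your route needs no derivative bounds on the kernel. Note that the paper's pointwise bound $|\nabla_xW_t(x-y)|\le Ct^{-1/2}W_t(x-y)$ holds only for $|x-y|\lesssim\sqrt t$. In the tail term $II$ it must be replaced by $Ct^{-(n+1)/2}e^{-c|x-y|^2/t}$, giving up part of the Gaussian exponent. So your argument works verbatim for any nonnegative, normalized kernel with enough decay, and it makes clear that only lower control of $f$ at large scales is used. The paper's route gives the symmetric estimate $|W_tf(x)-W_tf(z)|\le C\norm{f}_{\BLO(\R^n)}$ in one stroke; this in fact holds for all $f\in\BMO(\R^n)$. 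However, for the theorem as stated, and for its later use in Theorems~\ref{Thm:AppParabolic} and~\ref{Thm:Harnack}, your one-sided bound is all that is required.

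\textbf{One small point to add.} Note that $W_tf(x)-\inf_{z\in B(x,\sqrt t)}W_tf(z)\ge 0$ because $x\in B(x,\sqrt t)$. This is why your upper bound actually controls the $L^\infty$ norm.
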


As we have mentioned before, the space ${\rm BLO}(\R^n)$  is also related with the $A_1$ Muckenhoupt weights. We recall the definition of the $A_1$ Muckenhoupt weights as follows.

\begin{definition}($A_1$ Muckenhoupt Weights)
A non-negative, locally integrable function $w: \mathbb{R}^n \to \mathbb{R}$ is said to be an $A_1$ weight if there exists a constant $C > 0$ such that
\begin{equation}\label{eq:1.11}
M w(x) \leq C w(x) \quad \text{for almost every } x \in \mathbb{R}^n,
\end{equation}
where $M$ denotes the Hardy--Littlewood maximal operator. The smallest  constant $C$, such that the above inequality holds, is called the $A_1$ constant of $w$.
\end{definition}

In \cite{CR}, R. Coifman and R. Rochberg proved a characterization of BLO space as follows.

  \begin{thm}
   A function $f$ is in $\mathrm{BLO}(\R^n)$ if and only if $e^{\varepsilon f}\in A_1$ for some positive constant $\varepsilon$.
   \end{thm}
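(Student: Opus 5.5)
This is the Coifman--Rochberg characterization: $f\in\BLO(\R^n)$ if and only if $e^{\varepsilon f}\in A_1$ for some $\varepsilon>0$. I would prove the two implications separately. The reverse direction uses only Jensen's inequality; the forward direction relies on the John--Nirenberg inequality, using that $\BLO\subset\BMO$.

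\emph{($\Leftarrow$)} Assume $w=e^{\varepsilon f}\in A_1$ with constant $A$. Fix a ball $B$ and set $m=\essinf_B f$. Then for a.e.\ $x\in B$,
\[
\frac1{|B|}\int_B w\le Mw(x)\le A\,w(x),
\]
so taking the essential infimum over $B$ gives $\frac1{|B|}\int_B w\le A e^{\varepsilon m}$. By Jensen's inequality,
\[
\exp\Big(\frac{\varepsilon}{|B|}\int_B f\Big)\le \frac1{|B|}\int_B e^{\varepsilon f}\le Ae^{\varepsilon m}.
\]
Taking logarithms yields $\frac1{|B|}\int_B (f-m)\le \varepsilon^{-1}\log A$. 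Hence $\norm{f}_{\BLO(\R^n)}\le \varepsilon^{-1}\log A$.

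\emph{($\Rightarrow$)} Let $f\in\BLO(\R^n)$ with norm $N$.

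First, $f\in\BMO$. Indeed, $f_B-\essinf_B f\le N$, so
\[
\frac1{|B|}\int_B|f-f_B|\le \frac2{|B|}\int_B (f-\essinf_B f)\le 2N.
\]

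Second, apply the John--Nirenberg inequality. It gives constants $c_1,c_2$ such that for $\varepsilon< c_2/(2N)$,
\[
\frac1{|B|}\int_B e^{\varepsilon(f-f_B)}\le C_n .
\]
Therefore
\[
\frac1{|B|}\int_B e^{\varepsilon f}\le C_n e^{\varepsilon f_B}\le C_n e^{\varepsilon N}e^{\varepsilon\, \essinf_B f}\le C_n e^{\varepsilon N}e^{\varepsilon f(x)}
\]
for a.e.\ $x\in B$.

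Third, pass from averages over a ball to the maximal function. For a given $x$, the bound above holds for every ball $B\ni x$, except that the exceptional null set depends on $B$. To handle this, I would restrict to balls with rational centers and rational radii. This is a countable family, so the union of the exceptional sets is still null. The maximal function over this family dominates $Mw$ up to a dimensional constant, since any ball containing $x$ sits inside a rational ball containing $x$ of comparable measure. This gives $Mw\le C e^{\varepsilon N}w$ a.e., so $w=e^{\varepsilon f}\in A_1$.

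The main obstacle is the John--Nirenberg step, which I would simply cite. The only subtle point otherwise is the countability argument used to take the supremum over balls.
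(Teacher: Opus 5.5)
Your proof is correct. The paper does not prove this statement itself (it cites Coifman--Rochberg \cite{CR}), but the argument it runs in Proposition~\ref{prop:quantitative} is the same as yours: John--Nirenberg together with $f_B\le \essinf_B f+\norm{f}_{\BLO(\R^n)}$ for the forward direction, and Jensen for the converse.

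The only real difference is in the forward direction. The paper gets the bound $f_B\le f(x)+K$ simultaneously for all balls $B\ni x$ from Bennett's lemma. You obtain it directly through your countable-family argument; it would be simpler still to note that $\essinf_B f\le f(x)$ at every Lebesgue point $x\in B$.
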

Based on the theorem above and the heat characterization of BLO space as in Theorem \ref{Thm:heatC1.2}, we can get another new weight characterization of BLO space by using the heat semigroup.

\begin{thm}[Heat Characterization via $A_1$ Weights]\label{HeatCbyWeight}
For a locally integrable function $f: \mathbb{R}^n \to \mathbb{R}$, the following three statements are equivalent:
\begin{enumerate}
    \item[(i)] $f \in \mathrm{BLO}(\mathbb{R}^n)$.
    \item[(ii)] There exists $\varepsilon > 0$ such that $e^{\varepsilon f} \in A_1$.
    \item[(iii)] There exists $\varepsilon > 0$ and $C > 0$ such that for all $t > 0$ and almost every $x \in \mathbb{R}^n$,
    \[
    W_t(e^{\varepsilon f})(x) \leq C e^{\varepsilon f(x)}.
    \]
\end{enumerate}
\end{thm}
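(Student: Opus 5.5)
The equivalence (i)$\Leftrightarrow$(ii) is exactly the Coifman--Rochberg theorem quoted above, so it suffices to prove (ii)$\Rightarrow$(iii) and (iii)$\Rightarrow$(ii). Throughout, write $w=e^{\varepsilon f}$.

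For (ii)$\Rightarrow$(iii), I will use the standard domination of the Gaussian average by the Hardy--Littlewood maximal function. The kernel $W_t$ is radial and decreasing, and its integral equals $1$. Decomposing it into annuli $B(x,2^{k+1}\sqrt t)\setminus B(x,2^k\sqrt t)$, together with the ball $B(x,\sqrt t)$, gives
\[
W_t w(x)\le \sum_{k\ge 0} (4\pi t)^{-n/2}e^{-4^{k-1}}\int_{B(x,2^{k}\sqrt t)} w(y)\,dy \le C_n\, M w(x),
\]
since $\sum_k 2^{kn}e^{-4^{k-1}}<\infty$. Combining this with $Mw\le [w]_{A_1}w$ a.e.\ yields (iii) with the same $\varepsilon$ and $C=C_n[w]_{A_1}$. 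Note that the exceptional null set does not depend on $t$, because it comes from the $A_1$ inequality alone.

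For (iii)$\Rightarrow$(ii), I reverse the comparison on balls. Fix a ball $B=B(x_0,r)$. For any $x\in B$ and $y\in B$ we have $|x-y|<2r$, so taking $t=r^2$ gives
\[
W_{r^2}(x-y)\ge (4\pi r^2)^{-n/2}e^{-1}.
\]
Hence, for a.e.\ $x\in B$,
\[
\frac1{|B|}\int_B w \le \frac{(4\pi)^{n/2}e}{|B(0,1)|}\, W_{r^2}w(x)\le C' e^{\varepsilon f(x)}.
\]
The issue is that (iii) holds for a.e.\ $x$ with a null set that may depend on $t$. I deal with this by restricting to rational radii, so that only countably many null sets are discarded, and then passing to general balls by monotonicity: any ball is contained in a rational ball of comparable measure. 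This gives $\frac{1}{|B|}\int_B w\le C''\,\operatorname{ess\,inf}_B w$ for every ball $B$, which is the $A_1$ condition in its averaged form. It is equivalent to $Mw\le C''' w$ a.e., either via uncentered balls, or by taking $x$ to be a Lebesgue-point/density point. In particular, with $x\in B$ arbitrary, $Mw(x)$ is a supremum over balls containing $x$, each bounded by $C''\operatorname{ess\,inf}_B w\le C'' w(x)$ for a.e.\ $x$. This again uses a countable family of balls (rational centers and radii) to control the null sets.

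The main obstacle is this measure-theoretic bookkeeping of null sets in (iii)$\Rightarrow$(ii), namely passing from the $t$-dependent exceptional sets to a single null set. The countable rational-ball reduction handles it. An alternative, if one prefers to avoid it, is to note that $W_t w$ is continuous in $(x,t)$, so (iii) can be read against a fixed representative of $w$.
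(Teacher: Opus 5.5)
Your proposal is correct and follows the paper's proof: Coifman--Rochberg gives (i)$\Leftrightarrow$(ii), the domination $W_t w\le C_n Mw$ gives (ii)$\Rightarrow$(iii), and the lower Gaussian bound on balls (the paper's $Mw\le C_n\sup_{t>0}W_tw$) gives (iii)$\Rightarrow$(ii). Your bookkeeping of the $t$-dependent null sets via countably many rational balls handles a point the paper passes over silently; the off-by-one in your annular sum (the annulus $B(x,2^{k+1}\sqrt t)\setminus B(x,2^k\sqrt t)$ sits inside $B(x,2^{k+1}\sqrt t)$, and the central ball carries no factor $e^{-1/4}$) only changes the constant $C_n$.
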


This article is organized as follows. In Section \ref{Sec:heat}, we give the proofs of Theorem \ref{Thm:heatC1.2} and some applications of the theorem. In Section \ref{Sec:1.3}, we give the proofs of Theorem \ref{Thm:heatC}. We also prove the regularity of solutions of the heat equation with BLO boundary values using Theorem \ref{Thm:heatC}. In Section \ref{Sec3}, we give the proof of Theorem \ref{HeatCbyWeight}. Finally, in Section \ref{Sec:4}, we give an example showing that $\rm{BLO}$ is not a linear space, although it has a stability property under $L^\infty$ perturbations.

Throughout this article, the letters $C, c$ will denote  positive
constants which may change from one instance to another and depend on
the parameters involved. We will make  frequent use, without
mentioning it in relevant places, of the fact that for a positive
$A$ and a non-negative constant $a,$
$$\sup\limits_{t>0}t^a
e^{-At}=C_{a, A}<\infty.$$

\section{Proof of the heat semigroup characterization and its applications}\label{Sec:heat}
In this section, we will give the proof of Theorem \ref{Thm:heatC1.2} which are heat characterizations of $\BLO$ spaces.  We also give the applications of the heat characterizations in the boundary value problem, Gaussian John-Nirenberg criterion, nonlinear stability of BLO spaces, and  the BMO--BLO boundedness of the Littlewood--Paley $g$-function.

\subsection{Proof of Theorem \ref{Thm:heatC1.2}}

Let us prove Theorem \ref{Thm:heatC1.2} first.
\begin{proof}[Proof of Theorem~\ref{Thm:heatC1.2}]
Assume first that $f\in\BLO(\R^n)$. Fix $x\in\R^n$ and $t>0$, and put
$B=B(x,\sqrt t)$. Since $f-\underset B\essinf f\geq0$ almost everywhere on $B$,
\begin{equation}\label{eq:near-upper}
 \int_BW_t(x-y)\pos{f(y)-\underset B\essinf f}\,dy
 \leq C_n\norm{f}_{\BLO(\R^n)}.
\end{equation}

For $k\geq0$, let $A_k=2^{k+1}B\setminus2^kB$. If $y\in A_k$, then
\[
 W_t(x-y)\leq C_nt^{-n/2}e^{-4^{k-1}}.
\]
Since $\essinf_{2^{k+1}B}f\leq \underset B\essinf f$, we have
\[
 \pos{f(y)-\underset B\essinf f}
 \leq f(y)-\underset{2^{k+1} B}\essinf f
 \quad\text{for almost every }y\in2^{k+1}B.
\]
Therefore
\begin{align*}
 &\int_{A_k}W_t(x-y)\pos{f(y)-\underset B\essinf f}\,dy\\
 &\quad\leq C_nt^{-n/2}e^{-4^{k-1}}
 \int_{2^{k+1}B}\left(f(y)-\underset {z\in 2^{k+1}B}\essinf f(z)\right)dy\\
 &\quad\leq C_ne^{-4^{k-1}}2^{n(k+1)}\norm{f}_{\BLO(\R^n)}.
\end{align*}
Summing in $k$ and using \eqref{eq:near-upper} proves the upper inequality in
\eqref{eq:full-equivalence}.

Conversely, for $y\in B(x,\sqrt t)$,
$W_t(x-y)\geq(4\pi t)^{-n/2}e^{-1/4}$. Hence
\begin{align*}
 &\int_{\R^n}W_t(x-y)
 \pos{f(y)-\essinf_{z\in B(x,\sqrt t)}f(z)}\,dy\\
 &\quad\geq (4\pi t)^{-n/2}e^{-1/4}
 \int_{B(x,\sqrt t)}
 \left(f(y)-\underset {z\in B(x,\sqrt t)}\essinf   f(z)\right)dy\\
 &\quad=a_n\frac1{|B(x,\sqrt t)|}
 \int_{B(x,\sqrt t)}
 \left(f(y)-\underset {z\in B(x,\sqrt t)}\essinf f(z)\right)dy,
\end{align*}
where $a_n=(4\pi)^{-n/2}e^{-1/4}|B(0,1)|>0$. Taking the supremum over $x$
and $t$ proves the lower inequality in \eqref{eq:full-equivalence}.

On $B(x,\sqrt t)$, the normalized density satisfies
\begin{equation}\label{eq:normalized-density}
 e^{-1/4}\frac{\mathbf1_{B(x,\sqrt t)}(y)}{|B(x,\sqrt t)|}
 \leq\frac{W_t(x-y)\mathbf1_{B(x,\sqrt t)}(y)}{\vartheta_n}
 \leq e^{1/4}\frac{\mathbf1_{B(x,\sqrt t)}(y)}{|B(x,\sqrt t)|}.
\end{equation}
Apply this to
$f(y)-\underset {z\in B(x,\sqrt t)}\essinf  f(z)\geq0$ on the ball and take the
supremum over $x$ and $t$. This gives \eqref{eq:local-equivalence} and
completes the proof.
\end{proof}

\begin{remark}
The localized proof only uses two-sided bounds for the kernel on its natural
ball. The full-tail proof additionally uses the rapid Gaussian decay.
\end{remark}
\subsection{A boundary lower-envelope estimate}
Using the heat characterization in Theorem \ref{Thm:heatC1.2}, we can get a boundary lower-envelope estimate of the heat equation as follows.
\begin{cor}\label{cor:boundary-envelope}
Let $f\in\BLO(\R^n)$. Consider the solution $u(x,t)=W_tf(x)$ to the heat equation
\begin{align*}
\begin{cases}
\partial_t u = \Delta u, & \text{in } \mathbb{R}^n \times (0,\infty) \\
u(x,0) = f(x), & \text{on } \mathbb{R}^n.
\end{cases}
\end{align*} Then
\begin{equation}\label{eq:boundary-envelope}
 u(x,t)\leq \underset {z\in B(x,\sqrt t)}\essinf  f(z)
 +C_n\norm{f}_{\BLO(\R^n)}.
\end{equation}
Consequently, at every Lebesgue point $x$ of $f$,
\begin{equation}\label{eq:heat-above-boundary}
 \sup_{t>0}(W_tf(x)-f(x))
 \leq C_n\norm{f}_{\BLO(\R^n)}.
\end{equation}
\end{cor}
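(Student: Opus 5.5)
The plan is to read the corollary off the upper inequality in \eqref{eq:full-equivalence} of Theorem~\ref{Thm:heatC1.2}. Fix $x\in\R^n$, $t>0$, put $B=B(x,\sqrt t)$ and $m=\underset{z\in B}\essinf f(z)$. Since $f\in\BLO(\R^n)\subset\BMO(\R^n)$, $f$ grows at most logarithmically in mean, so $\int_{\R^n}W_t(x-y)\abs{f(y)}\,dy<\infty$ and $u(x,t)=W_tf(x)$ is well defined. Note also that $m>-\infty$: the BLO condition on $B$ forces $\underset B\essinf f$ to be finite.

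We use that $\int_{\R^n}W_t(x-y)\,dy=1$. This gives
\[
u(x,t)-m=\int_{\R^n}W_t(x-y)\bigl(f(y)-m\bigr)\,dy\le\int_{\R^n}W_t(x-y)\pos{f(y)-m}\,dy,
\]
because $s\le\pos{s}$ pointwise and $W_t\ge0$. The right-hand side is at most $C_n\norm{f}_{\BLO(\R^n)}$ by \eqref{eq:full-equivalence}. This proves \eqref{eq:boundary-envelope}. One integrability check is needed: splitting $f-m$ into positive and negative parts is legitimate since the positive part is integrable against $W_t$ by the theorem.

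For \eqref{eq:heat-above-boundary}, let $x$ be a Lebesgue point of $f$. Then $\underset{z\in B(x,\sqrt t)}\essinf f(z)\le\frac1{|B(x,\sqrt t)|}\int_{B(x,\sqrt t)}f$ for every $t$. Hence
\[
W_tf(x)-f(x)\le C_n\norm{f}_{\BLO(\R^n)}+\Bigl(\frac1{|B(x,\sqrt t)|}\int_{B(x,\sqrt t)}f-f(x)\Bigr).
\]
The bracket tends to $0$ only as $t\to0$, so it does not bound the supremum over all $t$. A better route is to use the essential infimum directly: at a Lebesgue point, $f(x)\ge\underset{B(x,r)}\essinf f$ for every $r$. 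Indeed, if $f(x)<\underset{B(x,r)}\essinf f$ for some $r$, then $f(x)<\underset{B(x,s)}\essinf f$ for all $s\le r$, and averaging over small balls contradicts the Lebesgue point property. With $r=\sqrt t$ we get $W_tf(x)-f(x)\le W_tf(x)-\underset{B(x,\sqrt t)}\essinf f\le C_n\norm{f}_{\BLO(\R^n)}$ from the first part. Taking $\sup_{t>0}$ completes the argument.

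The step that needs care is the last one: justifying $f(x)\ge\essinf_{B(x,\sqrt t)}f$ at Lebesgue points, as opposed to merely almost everywhere. The monotonicity argument above handles it: if $f(x)<\underset{B(x,r)}\essinf f$, then $f(x)<\underset{B(x,s)}\essinf f$ for every $s\le r$. Averaging over $B(x,s)$ then gives
\[
\frac1{|B(x,s)|}\int_{B(x,s)}\abs{f-f(x)}\ge\underset{B(x,r)}\essinf f-f(x)>0,
\]
which contradicts $x$ being a Lebesgue point. Everything else is a direct consequence of \eqref{eq:full-equivalence}.
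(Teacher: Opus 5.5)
Your proof is correct and follows the paper's argument: use that $W_t(x-\cdot)$ has mass one, replace $f-\essinf_{B(x,\sqrt t)} f$ by its positive part, apply the upper bound in \eqref{eq:full-equivalence}, and then use $\essinf_{B(x,\sqrt t)} f\le f(x)$ at Lebesgue points. Your contradiction argument for that last inequality and your well-definedness remarks fill in details the paper only asserts, and the abandoned first attempt via ball averages in your third paragraph can simply be dropped.
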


\begin{proof}
Since the heat kernel has mass one,
\begin{align*}
 &W_tf(x)-\underset {z\in B(x,\sqrt t)}\essinf  f(z)\\
 &\quad=\int_{\R^n}W_t(x-y)
 \left(f(y)-\underset{z\in B(x,\sqrt t)}\essinf f(z)\right)dy\\
 &\quad\leq\int_{\R^n}W_t(x-y)
 \pos{f(y)-\underset{z\in B(x,\sqrt t)}\essinf f(z)}\,dy.
\end{align*}
Apply the upper inequality in \eqref{eq:full-equivalence}. At a Lebesgue
point $x$, $\underset{z\in B(x,r)}\essinf f(z)\leq f(x)$ for every $r>0$, which gives
\eqref{eq:heat-above-boundary}.
\end{proof}

\subsection{A Gaussian John--Nirenberg criterion}
We can also develop a Gaussian John-Nirenberg criterion by using the new heat characterization.
\begin{prop}\label{prop:Gaussian-JN}
There are constants $\alpha_n>0$ and $A_n>1$ such that every nonconstant
$f\in\BLO(\R^n)$ satisfies
\begin{equation}\label{eq:Gaussian-JN}
 \sup_{x\in\R^n,\,t>0}
 \frac1{\vartheta_n}\int_{B(x,\sqrt t)}W_t(x-y)
 \exp\!\left(
  \frac{\alpha_n
  (f(y)-\underset{z\in B(x,\sqrt t)}\essinf f(z))}{\norm{f}_{\BLO(\R^n)}}
 \right)dy
 \leq A_n.
\end{equation}
Conversely, if for some $\lambda>0$ and $A\geq1$,
\begin{equation}\label{eq:Gaussian-JN-converse}
 \sup_{x\in\R^n,\,t>0}
 \frac1{\vartheta_n}\int_{B(x,\sqrt t)}W_t(x-y)
 e^{\lambda(f(y)-\underset{z\in B(x,\sqrt t)}\essinf f(z))}\,dy
 \leq A,
\end{equation}
then $f\in\BLO(\R^n)$ and
\begin{equation}\label{eq:Gaussian-JN-norm}
 \norm{f}_{\BLO(\R^n)}
 \leq\frac{e^{1/4}}{\lambda}\log A.
\end{equation}
\end{prop}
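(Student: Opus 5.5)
The plan is to reduce both directions to statements about plain ball averages, using the two-sided density comparison \eqref{eq:normalized-density} from the proof of Theorem \ref{Thm:heatC1.2}, and then to invoke the classical John--Nirenberg inequality for BLO. Write $B=B(x,\sqrt t)$ and $g_B(y)=f(y)-\essinf_B f$. This function is nonnegative almost everywhere on $B$. Every integrand that appears is nonnegative on $B$, so by \eqref{eq:normalized-density} each normalized Gaussian integral over $B$ lies between $e^{-1/4}$ and $e^{1/4}$ times the corresponding unweighted average $\frac1{|B|}\int_B$.

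For \eqref{eq:Gaussian-JN} we use the John--Nirenberg inequality for BMO. Since $\BLO\subset\BMO$ with $\|f\|_{\BMO}\le 2\|f\|_{\BLO}$, there are dimensional constants $c_1,c_2$ such that
\[
|\{y\in B:|f(y)-f_B|>\lambda\}|\le c_1e^{-c_2\lambda/\|f\|_{\BMO}}|B|.
\]
We also have $0\le f_B-\essinf_B f\le\|f\|_{\BLO}$. Hence
\[
g_B\le |f-f_B|+\|f\|_{\BLO}\quad\text{on } B.
\]
Take $\alpha_n=c_2/4$. Integrating the distribution function then gives
\[
\frac1{|B|}\int_B e^{\alpha_n g_B/\|f\|_{\BLO}}\,dy\le e^{\alpha_n}\Bigl(1+\tfrac{c_1}{1}\Bigr)=:A_n',
\]
a dimensional constant. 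Here the hypothesis that $f$ is nonconstant only guarantees $\|f\|_{\BLO}>0$. Multiplying by $e^{1/4}$ via \eqref{eq:normalized-density} yields \eqref{eq:Gaussian-JN} with $A_n=e^{1/4}A_n'$, and $A_n>1$ holds automatically.

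For the converse, the first idea is Jensen's inequality with respect to the probability measure $\vartheta_n^{-1}W_t(x-y)\mathbf 1_B(y)\,dy$:
\[
\exp\Bigl(\lambda\,\tfrac1{\vartheta_n}\int_B W_t(x-y)g_B(y)\,dy\Bigr)\le A.
\]
Combining this with the lower bound in \eqref{eq:normalized-density} gives $\frac{1}{|B|}\int_B g_B\le \frac{e^{1/4}}{\lambda}\log A$. Taking the supremum over balls gives \eqref{eq:Gaussian-JN-norm}, and this bound also shows that $f\in\BLO$.

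The main obstacle is the precise form of the BLO John--Nirenberg step, in particular the bookkeeping of constants between the BMO and BLO norms. An alternative is the Coifman--Rochberg $A_1$ route, but that gives a non-explicit $\varepsilon$. I would instead go through BMO as above, since only dimensional constants are needed.
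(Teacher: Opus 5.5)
Your proposal is correct and follows the paper's proof. The forward bound is an integral John--Nirenberg estimate on balls, transferred to the Gaussian average through the upper density bound in \eqref{eq:normalized-density}. The converse is Jensen's inequality for the probability measure $\vartheta_n^{-1}W_t(x-y)\mathbf 1_B(y)\,dy$, combined with the lower density bound.

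The only difference is that the paper cites the BLO integral John--Nirenberg lemma from the literature, while you derive it yourself from the BMO distributional inequality. You do this via $\|f\|_{\BMO}\le 2\|f\|_{\BLO}$ and $f-\essinf_B f\le |f-f_B|+\|f\|_{\BLO}$. Your computation giving the constant $e^{\alpha_n}(1+c_1)$ with $\alpha_n=c_2/4$ checks out.
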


\begin{proof}
Put $N:=\norm{f}_{\BLO(\R^n)}>0$. We have $\norm{f}_{\BMO(\R^n)}\leq2N$ and
\[
 f_B-\underset{z\in B}\essinf  f(z)\leq N.
\]
The integral John--Nirenberg inequality for  BLO spaces (see \cite[Lemma 2.1]{WZT18})  gives dimensional constants
$\alpha_n,A_n'$ such that
\[
 \frac1{|B|}\int_B
 \exp\!\left(
  \frac{\alpha_n(f(y)-\underset{z\in B }\essinf f(z))}{N}
 \right)dy\leq A_n'.
\]
The upper bound in \eqref{eq:normalized-density} proves
\eqref{eq:Gaussian-JN}.

Conversely, Jensen's inequality applied to the normalized localized Gaussian
average yields
\begin{align*}
 &\exp\!\left\{\lambda\left[
  \frac1{\vartheta_n}\int_{B(x,\sqrt t)}W_t(x-y)f(y)\,dy
  -\underset{z\in B(x,\sqrt t)}\essinf f(z)
 \right]\right\}\\
 &\quad\leq\frac1{\vartheta_n}\int_{B(x,\sqrt t)}W_t(x-y)
 e^{\lambda(f(y)-\underset{z\in B(x,\sqrt t)}\essinf f(z))}\,dy.
\end{align*}
Use \eqref{eq:Gaussian-JN-converse} and the lower inequality in
\eqref{eq:local-equivalence} to obtain \eqref{eq:Gaussian-JN-norm}.
\end{proof}

\subsection{Nonlinear stability}
We list a nonlinear stability of BLO spaces at here.
\begin{prop}\label{prop:nonlinear-stability}
Let $f\in\BLO(\R^n)$.
\begin{enumerate}
 \item[(i)] If $h\in L^\infty(\R^n)$, then
 \begin{align}\label{eq:bounded-perturbation-functional}
 &\sup_{x\in\R^n,\,t>0}\int_{\R^n}W_t(x-y)
 \pos{f(y)+h(y)-\underset{z\in B(x,\sqrt t)}\essinf (f(z)+h(z))}\,dy\nonumber \\
 &\quad\leq
 \sup_{x\in\R^n,\,t>0}\int_{\R^n}W_t(x-y)
 \pos{f(y)-\underset{z\in B(x,\sqrt t)}\essinf f(z)}\,dy
 +2\norm{h}_{L^\infty(\R^n)}.
 \end{align}
 In particular,
 \[
  \norm{f+h}_{\BLO(\R^n)}
  \leq\norm{f}_{\BLO(\R^n)}+2\norm{h}_{L^\infty(\R^n)}.
 \]
 \item[(ii)] If $\Phi:\R\to\R$ is nondecreasing and $L$-Lipschitz, then
 \begin{align}\label{eq:Lipschitz-functional}
 &\sup_{x\in\R^n,\,t>0}\int_{\R^n}W_t(x-y)
 \pos{\Phi(f(y))-\underset{z\in B(x,\sqrt t)}\essinf \Phi(f(z))}\,dy\nonumber \\
 &\quad\leq L
 \sup_{x\in\R^n,\,t>0}\int_{\R^n}W_t(x-y)
 \pos{f(y)-\underset{z\in B(x,\sqrt t)}\essinf f(z)}\,dy.
 \end{align}
 Consequently, $\Phi\circ f\in\BLO(\R^n)$. In particular, lower and
 two-sided truncations preserve $\BLO(\R^n)$.
\end{enumerate}
\end{prop}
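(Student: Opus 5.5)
The proof will be a pointwise comparison of the integrands, followed by integration against the positive kernel $W_t(x-\cdot)$. After taking suprema over $x$ and $t$, the norm statements will follow from Theorem~\ref{Thm:heatC1.2}.

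For (i), fix $x\in\R^n$ and $t>0$, and put $B=B(x,\sqrt t)$. The first step is the bound
\[
\underset{B}{\essinf}(f+h)\geq \underset{B}{\essinf} f-\norm{h}_{L^\infty(\R^n)},
\]
which holds because $f+h\geq f-\norm{h}_{L^\infty}$ almost everywhere. Combined with $h(y)\leq\norm{h}_{L^\infty}$, it gives, for almost every $y$,
\[
f(y)+h(y)-\underset{B}{\essinf}(f+h)\leq f(y)-\underset{B}{\essinf} f+2\norm{h}_{L^\infty(\R^n)}.
\]
Since $s\mapsto\pos{s}$ is nondecreasing and $\pos{a+b}\leq\pos{a}+b$ for $b\geq0$, the same bound holds with positive parts on both sides. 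Integrating against $W_t(x-\cdot)$, which has total mass one, yields \eqref{eq:bounded-perturbation-functional} after taking suprema over $x$ and $t$.

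For the BLO-norm inequality in (i) I will not pass through \eqref{eq:full-equivalence}, since its constants $c_n,C_n$ would spoil the sharp constant. Instead I repeat the same pointwise argument directly with the ball average in Definition~\ref{Def:BLO}. On $B$ both sides of the pointwise bound are nonnegative, so averaging over $B$ gives
\[
\norm{f+h}_{\BLO(\R^n)}\leq\norm{f}_{\BLO(\R^n)}+2\norm{h}_{L^\infty(\R^n)}.
\]

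For (ii), the key identity is
\[
\underset{B}{\essinf}\,\Phi(f)=\Phi\Big(\underset{B}{\essinf} f\Big).
\]
This needs $\Phi$ nondecreasing and continuous; Lipschitz gives continuity. The inequality $\geq$ follows from monotonicity. The inequality $\leq$ follows because $f<m+\delta$ on a set of positive measure, where $m=\essinf_B f$, and then continuity lets $\delta\to0$. The value $m$ is finite because $f\in\BLO$ forces $\essinf_B f>-\infty$.

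With this identity, for almost every $y$,
\[
\Phi(f(y))-\Phi(m)\leq L\,\pos{f(y)-m}
\]
in both cases $f(y)\geq m$ and $f(y)<m$. Hence
\[
\pos{\Phi(f(y))-\Phi(m)}\leq L\pos{f(y)-m}.
\]
Integrating against $W_t(x-\cdot)$ and taking suprema gives \eqref{eq:Lipschitz-functional}.

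Membership $\Phi\circ f\in\BLO(\R^n)$ then follows from \eqref{eq:full-equivalence}. The only small gap is that this equivalence is stated for $L^1_{\mathrm{loc}}$ functions, so I need $\Phi\circ f\in L^1_{\mathrm{loc}}$. This holds because $\abs{\Phi(s)}\leq\abs{\Phi(0)}+L\abs{s}$.

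The truncations are covered by specific choices of $\Phi$, namely $\Phi(s)=\max(s,a)$ and $\Phi(s)=\min(\max(s,a),b)$, both nondecreasing and $1$-Lipschitz. The only delicate step in the whole argument is the essinf–$\Phi$ commutation, which needs the finiteness of $m$ and the continuity of $\Phi$; everything else is monotonicity.
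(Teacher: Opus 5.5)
Your proposal is correct and follows the paper's proof essentially step for step. In (i) you use the same pointwise bound $\essinf_B(f+h)\geq\essinf_B f-\norm{h}_{L^\infty(\R^n)}$, integrated against $W_t(x-\cdot)$, and read the BLO estimate off Definition~\ref{Def:BLO}. In (ii) you use the same commutation $\essinf_B\Phi(f)=\Phi(\essinf_B f)$ together with the Lipschitz bound, followed by Theorem~\ref{Thm:heatC1.2}.

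Your extra checks (finiteness of $\essinf_B f$, $\Phi\circ f\in L^1_{\mathrm{loc}}$) fill in details the paper leaves implicit. Note also that only the direction $\essinf_B\Phi(f)\geq\Phi(\essinf_B f)$ is needed, and it follows from monotonicity alone, so the commutation is not really the delicate step.
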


\begin{proof}
For every ball $B$,
\[
 \essinf_{z\in B}(f(z)+h(z))
 \geq \underset{z\in B}\essinf f(z)-\norm{h}_{L^\infty(\R^n)}.
\]
It follows that
\[
 \pos{f+h-\underset{z\in B }\essinf  (f+h)}
 \leq\pos{f-\underset{z\in B }\essinf f}+2\norm{h}_{L^\infty(\R^n)}.
\]
Integration proves \eqref{eq:bounded-perturbation-functional}, and Definition
\ref{Def:BLO} gives the displayed BLO estimate.

Continuity and monotonicity give
\[
\underset{z\in B}\essinf\Phi(f(z))=\Phi\!\left(\underset{z\in B}\essinf f(z)\right).
\]
The Lipschitz property then yields
\[
 \pos{\Phi(f(y))-\Phi(\underset{z\in B}\essinf f)}
 \leq L\pos{f(y)-\underset{z\in B}\essinf f}.
\]
Integration proves \eqref{eq:Lipschitz-functional}; apply
Theorem~\ref{Thm:heatC1.2}.
\end{proof}

\subsection{BMO--BLO boundedness of the  Littlewood-Paley $g$-function}
In \cite{MY}, Meng and Yang proved that the Littlewood--Paley $g$-function is bounded from BMO($\R^n$) to BLO($\R^n$). Here, as an application of Theorem \ref{Thm:heatC1.2}, we reprove this result using our new heat characterization of BLO spaces.
\begin{thm}\label{Thm:gbound}
Let $g$ be the Littlewood-Paley $g$-function defined by
\[
g(f)(x) = \left( \int_0^\infty |t\partial_t W_t f(x)|^2 \frac{dt}{t} \right)^{1/2}.
\]
Then, for any $f \in \mathrm{BMO}(\mathbb{R}^n)$ such that $g(f)(x)<\infty$ a.e. $x\in \R^n$, the function $[g(f)]^2$ belongs to $\mathrm{BLO}(\mathbb{R}^n)$ with the norm estimate
\[
\|[g(f)]^2\|_{\mathrm{BLO}(\mathbb{R}^n)} \leq C \|f\|_{\mathrm{BMO}(\mathbb{R}^n)}^2,
\]
where $C > 0$ depends only on the dimension $n$.
\end{thm}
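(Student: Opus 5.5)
Since the paper proves this "as an application of Theorem \ref{Thm:heatC1.2}", the plan is to verify condition (iii) there, or more simply the defining BLO condition, for $F:=[g(f)]^2$. Fix $x_0\in\R^n$ and $t_0>0$, and put $B=B(x_0,\sqrt{t_0})$. The goal is to find a constant $c_B$ with
\[
 \frac1{|B|}\int_B F(y)\,dy - c_B\le C\norm{f}_{\BMO(\R^n)}^2
 \quad\text{and}\quad
 c_B\le \underset{B}\essinf F .
\]
Then $\frac1{|B|}\int_B (F-\underset B\essinf F)\le C\norm{f}_{\BMO}^2$. By \eqref{eq:local-equivalence} it makes no difference whether we average with the localized Gaussian or with the flat average. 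For the split, write
\[
 F(y)=\int_0^{4t_0}|t\partial_tW_tf(y)|^2\frac{dt}{t}+\int_{4t_0}^\infty|t\partial_tW_tf(y)|^2\frac{dt}{t}=:F_1(y)+F_2(y).
\]

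\emph{Local part.} Since $F_1\ge0$, it suffices to show $\frac1{|B|}\int_B F_1\le C\norm{f}_{\BMO}^2$. Write $f=(f-f_{4B})\mathbf 1_{4B}+(f-f_{4B})\mathbf 1_{(4B)^c}+f_{4B}$; the constant is annihilated by $\partial_t W_t$.
\begin{itemize}
 \item For the first piece, use $L^2$-boundedness of $g$ (Plancherel) to get a bound by $|B|^{-1}\int_{4B}|f-f_{4B}|^2\le C\norm{f}_{\BMO}^2$, using John--Nirenberg.
 \item For the second piece, for $y\in B$ and $t\le4t_0$, use the kernel bound $|t\partial_tW_t(y-z)|\le Ct^{-n/2}e^{-c|y-z|^2/t}$ together with the standard annular decomposition. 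This gives $|t\partial_tW_t((f-f_{4B})\mathbf 1_{(4B)^c})(y)|\le C(t/t_0)^{\delta}\norm{f}_{\BMO}$ for some $\delta>0$ (in fact Gaussian decay), which is square-integrable in $dt/t$ on $(0,4t_0)$.
\end{itemize}

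\emph{Global part.} The key is a smoothness estimate: for $y,y'\in B$ and $t\ge4t_0$,
\[
 \big|t\partial_tW_tf(y)-t\partial_tW_tf(y')\big|\le C\Big(\frac{\sqrt{t_0}}{\sqrt t}\Big)\norm{f}_{\BMO}.
\]
This follows from the gradient bound $|\nabla_y t\partial_tW_t(y-z)|\le Ct^{-(n+1)/2}e^{-c|y-z|^2/t}$ after subtracting $f_{B(y,\sqrt t)}$, together with the uniform bound $|t\partial_tW_tf|\le C\norm{f}_{\BMO}$. Writing $a=t\partial_tW_tf(y)$ and $b=t\partial_tW_tf(y')$, we have $|a|^2-|b|^2\le |a-b|(|a-b|+2|b|)$. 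Hence
\[
 F_2(y)-F_2(y')\le C\norm{f}_{\BMO}\int_{4t_0}^\infty \Big(\frac{t_0}{t}\Big)^{1/2}|t\partial_tW_tf(y')|\frac{dt}{t}+C\norm{f}_{\BMO}^2\le C\norm{f}_{\BMO}^2 .
\]
Here I would simply use the uniform bound $|b|\le C\norm{f}_{\BMO}$ and the integrability of $(t_0/t)^{1/2}\,dt/t$. Thus $F_2(y)\le F_2(y')+C\norm{f}_{\BMO}^2$ for all $y,y'\in B$.

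\emph{Conclusion.} Take $c_B:=\underset{y'\in B}\essinf F_2(y')$. Then $c_B\le\underset B\essinf F$ because $F_1\ge0$. Combining the two parts,
\[
 \frac1{|B|}\int_B F-c_B\le \frac1{|B|}\int_B F_1+\sup_{y\in B}\big(F_2(y)-c_B\big)\le C\norm{f}_{\BMO}^2 .
\]
The finiteness hypothesis $g(f)<\infty$ a.e.\ ensures that $F$ is a genuine function, so the essential infima make sense.

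The main obstacle is the global smoothness estimate. Its difficulty is making it pointwise uniform in $y,y'\in B$, with the factor $(t_0/t)^{1/2}$ integrable against $dt/t$, using only BMO control. The standard trick is to subtract averages over the dyadic dilates $2^kB(y,\sqrt t)$ and sum the $k\,\norm{f}_{\BMO}$ growth against the Gaussian decay.
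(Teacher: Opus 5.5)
Your proposal is correct and follows essentially the same route as the paper: you split $[g(f)]^2$ at time $4t_0$, bound the local part via $L^2$-boundedness of $g$ plus annular kernel decay, and control the tail oscillation on $B$ through the uniform and gradient bounds for $t\partial_tW_tf$ (the paper's Lemma~\ref{lem:BMOest}). The only differences are cosmetic: you use the constant $\essinf_B F_2$ where the paper uses a near-minimizing point $z_\varepsilon$, and you verify the flat BLO average where the paper checks condition (iii) of Theorem~\ref{Thm:heatC1.2}.
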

\begin{proof}
Set
\[
A_sf(y)=s\partial_sW_sf(y),
\qquad h(y)=[g(f)(y)]^2.
\]
Fix $x\in\R^n$ and $r>0$, and put $B=B(x,r)$. Split
\[
h(y)=N_r(y)+F_r(y),
\]
where
\[
N_r(y):=\int_0^{4r^2}|A_sf(y)|^2\frac{ds}{s},
\qquad
F_r(y):=\int_{4r^2}^{\infty}|A_sf(y)|^2\frac{ds}{s}.
\]

We first establish the local square-function estimate
\begin{equation}\label{eq:local-square-estimate}
\frac1{|B|}\int_BN_r(y)\,dy
\leq C\|f\|_{\mathrm{BMO}(\R^n)}^2.
\end{equation}
Decompose
\[
f=f_1+f_2+f_{8B},
\qquad
f_1=(f-f_{8B})\chi_{8B},
\quad
f_2=(f-f_{8B})\chi_{\R^n\setminus8B}.
\]
Since $A_s$ annihilates constants, the $L^2$ boundedness of the
Littlewood--Paley $g$-function and the John--Nirenberg inequality give
\[
\int_B\int_0^{4r^2}|A_sf_1(y)|^2\frac{ds}{s}\,dy
\leq C\|f_1\|_{L^2(\R^n)}^2
\leq C|B|\|f\|_{\mathrm{BMO}(\R^n)}^2.
\]
For $y\in B$, decompose $\R^n\setminus8B$ into annuli on which
$|y-z|\sim2^kr$. The heat-kernel derivative estimate and the
John--Nirenberg inequality imply, for $0<s\leq4r^2$,
\begin{align*}
|A_sf_2(y)|
&\leq C\|f\|_{\mathrm{BMO}(\R^n)}
 \sum_{k\geq0}(k+1)\left(\frac{4^kr^2}{s}\right)^{n/2}\left(1+\frac{4^kr^2}{s}\right)e^{-c\frac{4^kr^2}{s}},
 \\
&\leq C\|f\|_{\mathrm{BMO}(\R^n)}
 \left(\frac{s}{r^2}\right)^{1/2}.
\end{align*}
Here we used $u_k\geq4^{k-1}$ and absorbed the polynomial factors into
the exponential decay. Consequently,
\[
\int_0^{4r^2}|A_sf_2(y)|^2\frac{ds}{s}
\leq C\|f\|_{\mathrm{BMO}(\R^n)}^2,
\qquad y\in B.
\]
Combining the last two estimates proves \eqref{eq:local-square-estimate}.

To control the tail $F_r$, we use the following standard estimate.
\begin{lem}\label{lem:BMOest}
\begin{enumerate}
\item
For $f \in \mathrm{BMO}(\mathbb{R}^n)$ and $s > 0$, there exists a constant $C > 0$ depending only on $n$ such that:
\[
|s\partial_s W_s f(y)| \leq C \|f\|_{\mathrm{BMO}(\mathbb{R}^n)} \quad \text{for all } y \in \mathbb{R}^n.
\]
\item For $f \in \mathrm{BMO}(\mathbb{R}^n)$, $s > 0$, and $x, z \in \mathbb{R}^n$, we have
\[
\left| |s\partial_s W_s f(x)|^2 - |s\partial_s W_s f(z)|^2 \right| \leq C \|f\|_{\mathrm{BMO}(\mathbb{R}^n)}^2 \frac{|x - z|}{\sqrt{s}}.
\]
\end{enumerate}
\end{lem}

\begin{proof}
${\it (1)}$ We provide a complete proof using the kernel representation and careful estimates.
The time derivative has the kernel representation
\[
s\partial_s W_s f(y) = \int_{\mathbb{R}^n} K_s(y,z) f(z)  dz,
\]
where
\[
K_s(y,z) = (4\pi s)^{-n/2} e^{-\frac{|y-z|^2}{4s}} \left[ -\frac{n}{2} + \frac{|y-z|^2}{4s} \right].
\]
We know that
\[
\int_{\mathbb{R}^n} K_s(y,z)  dz = 0.
\]
Let $B = B(y, \sqrt{s})$. Then
\[
s\partial_s W_s f(y) = \int_{\mathbb{R}^n} K_s(y,z) [f(z) - f_B]  dz.
\]
Decompose the integral into two parts
\[
s\partial_s W_s f(y) = \tilde I_{\text{near}}(y) + \tilde I_{\text{far}}(y),
\]
where
\begin{align*}
\tilde I_{\text{near}}(y) &= \int_{2B} K_s(y,z) [f(z) - f_B]  dz, \\
\tilde I_{\text{far}}(y) &= \int_{\mathbb{R}^n \setminus 2B} K_s(y,z) [f(z) - f_B]  dz.
\end{align*}
We estimate the near part first.
For $z \in 2B$, we have $|y-z| \leq 2\sqrt{s}$. And
\[
|K_s(y,z)| \leq C s^{-n/2} \left(1 + \frac{|y-z|^2}{s}\right) e^{-\frac{|y-z|^2}{4s}} \leq C s^{-n/2}.
\]
Therefore,
\[
|\tilde I_{\text{near}}(y)| \leq C s^{-n/2} \int_{2B} |f(z) - f_B|  dz.
\]
By the John-Nirenberg inequality, we get
\[
\int_{2B} |f(z) - f_B|  dz \leq C |2B| \|f\|_{\mathrm{BMO}(\R^n)} = C s^{n/2} \|f\|_{\mathrm{BMO}(\R^n)}.
\]
Thus,
\[
|\tilde I_{\text{near}}(y)| \leq C s^{-n/2} \cdot C s^{n/2} \|f\|_{\mathrm{BMO}(\R^n)} = C \|f\|_{\mathrm{BMO}(\R^n)}.
\]

For $z \in \mathbb{R}^n \setminus 2B$, we have $|y-z| \geq 2\sqrt{s}$. The kernel has rapid decay. Then, we have
\[
|K_s(y,z)| \leq C s^{-n/2} \left(\frac{|y-z|}{\sqrt{s}}\right)^2 e^{-\frac{|y-z|^2}{4s}} \leq C s^{-n/2} e^{-c\frac{|y-z|^2}{s}}.
\]
Decompose the far region $\mathbb{R}^n \setminus 2B$ into annuli
\[
A_k = \{z : 2^k \cdot 2\sqrt{s} \leq |y-z| < 2^{k+1} \cdot 2\sqrt{s}\}, \quad k \geq 0.
\]
For $z \in A_k$, we have $|y-z| \sim 2^k \sqrt{s}$. So,
\[
|K_s(y,z)| \leq C s^{-n/2} e^{-c 4^k}.
\]
Then, we have
\[
\left| \int_{A_k} K_s(y,z) [f(z) - f_B]  dz \right| \leq C s^{-n/2} e^{-c 4^k} \int_{A_k} |f(z) - f_B|  dz\le C e^{-c 4^k} 2^{kn} (1 + k) \|f\|_{\mathrm{BMO}(\mathbb{R}^n)}.
\]
Summing over $k \geq 0$, we get that
\[
|\tilde I_{\text{far}}(y)| \leq C \|f\|_{\mathrm{BMO}(\R^n)} \sum_{k=0}^\infty e^{-c 4^k} 2^{kn} (1 + k)\le  C \|f\|_{\mathrm{BMO}(\R^n)}.
\]
Combining the near and far estimates, we have
\[
|s\partial_s W_s f(y)| \leq |\tilde I_{\text{near}}(y)| + |\tilde I_{\text{far}}(y)| \leq C \|f\|_{\mathrm{BMO}(\R^n)}.
\]

${\it (2)}$
By   estimate  ${\it (1)}$, we have
\begin{align*}
&\left| |s\partial_s W_s f(x)|^2 - |s\partial_s W_s f(z)|^2 \right| \\ &\leq |s\partial_s W_s f(x) - s\partial_s W_s f(z)| \cdot \left( |s\partial_s W_s f(x)| + |s\partial_s W_s f(z)| \right) \\
&\leq C \|f\|_{\mathrm{BMO}(\R^n)} |s\partial_s W_s f(x) - s\partial_s W_s f(z)|.
\end{align*}
Since $\int_{\R^n}\nabla_\xi K_s(\xi,w)\,dw=0$, the same annular
argument as in part ${\it (1)}$, now using
\[
|\nabla_\xi K_s(\xi,w)|
\leq C s^{-(n+1)/2}
\left(1+\frac{|\xi-w|}{\sqrt{s}}\right)^3
e^{-c|\xi-w|^2/s},
\]
gives the uniform gradient estimate
\[
|\nabla(s\partial_sW_sf)(\xi)|
\leq C s^{-1/2}\|f\|_{\mathrm{BMO}(\R^n)}.
\]
The mean value theorem therefore yields
\[
|s\partial_s W_s f(x)-s\partial_s W_s f(z)|
\leq C\|f\|_{\mathrm{BMO}(\R^n)}
\frac{|x-z|}{\sqrt{s}}.
\]
Combining the preceding two inequalities proves part ${\it (2)}$.
\end{proof}

Now, we continue the proof of Theorem \ref{Thm:gbound}.
For $y,z\in B$, Lemma \ref{lem:BMOest} gives
\begin{align}
|F_r(y)-F_r(z)|
&\leq \int_{4r^2}^{\infty}
 \left||A_sf(y)|^2-|A_sf(z)|^2\right|\frac{ds}{s}\notag\\
&\leq C\|f\|_{\mathrm{BMO}(\R^n)}^2|y-z|
 \int_{4r^2}^{\infty}\frac{ds}{s^{3/2}}
\leq C\|f\|_{\mathrm{BMO}(\R^n)}^2.
\label{eq:tail-oscillation}
\end{align}

Let
\[
m_B=\underset{z\in B} \essinf h(z).
\]
Because $g(f)<\infty$ almost everywhere, $m_B<\infty$. Given
$\varepsilon>0$, choose $z_\varepsilon\in B$, outside the exceptional
null sets, such that
\[
h(z_\varepsilon)\leq m_B+\varepsilon.
\]
For almost every $y\in B$, \eqref{eq:tail-oscillation} implies
\begin{align}
0\leq h(y)-m_B
&=N_r(y)-N_r(z_\varepsilon)
  +F_r(y)-F_r(z_\varepsilon)
  +h(z_\varepsilon)-m_B\notag\\
&\leq N_r(y)+C\|f\|_{\mathrm{BMO}(\R^n)}^2+\varepsilon.
\label{eq:h-minus-inf}
\end{align}
In view of \eqref{eq:local-square-estimate}, this also proves that
$h\in L^1_{\mathrm{loc}}(\R^n)$.

We now verify condition ${\it (iii)}$ of Theorem
\ref{Thm:heatC1.2}. By the normalization of the localized heat kernel,
\[
\frac1{\vartheta_n}\int_BW_{r^2}(x-y)\,dy=1.
\]
Hence, using \eqref{eq:h-minus-inf}, the estimate
$W_{r^2}(x-y)\leq Cr^{-n}$ for $y\in B$, and
\eqref{eq:local-square-estimate}, we obtain
\begin{align*}
&\frac1{\vartheta_n}\int_{B(x,r)}W_{r^2}(x-y)h(y)\,dy
 -\underset{z\in B(x,r)}\essinf h(z)\\
&\quad=\frac1{\vartheta_n}\int_BW_{r^2}(x-y)
 \bigl(h(y)-m_B\bigr)\,dy\\
&\quad\leq \frac{C}{|B|}\int_BN_r(y)\,dy
 +C\|f\|_{\mathrm{BMO}(\R^n)}^2+\varepsilon\\
&\quad\leq C\|f\|_{\mathrm{BMO}(\R^n)}^2+\varepsilon.
\end{align*}
Letting $\varepsilon\downarrow0$ and taking the supremum over
$x\in\R^n$ and $r>0$, we get
\[
\sup_{x\in\R^n,\,t>0}
\left[
\frac1{\vartheta_n}\int_{B(x,\sqrt t)}W_t(x-y)h(y)\,dy
-\underset{z\in B(x,\sqrt t)} \essinf h(z)
\right]
\leq C\|f\|_{\mathrm{BMO}(\R^n)}^2.
\]
Theorem \ref{Thm:heatC1.2}, together with
\eqref{eq:local-equivalence}, now shows that $h=[g(f)]^2$ belongs to
$\mathrm{BLO}(\R^n)$ and
\[
\|[g(f)]^2\|_{\mathrm{BLO}(\R^n)}
\leq C\|f\|_{\mathrm{BMO}(\R^n)}^2.
\]
\end{proof}
Consequently, we obtain the $\BMO-\BLO$ boundedness of the Littlewood-Paley $g$-function.
\begin{cor}
Under the same assumptions as in Theorem \ref{Thm:gbound}, $g(f) \in \mathrm{BLO}(\mathbb{R}^n)$ with
\[
\|g(f)\|_{\mathrm{BLO}(\mathbb{R}^n)} \leq C \|f\|_{\mathrm{BMO}(\mathbb{R}^n)}.
\]
\end{cor}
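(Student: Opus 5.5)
The plan is to deduce the corollary directly from Theorem~\ref{Thm:gbound}, using only Definition~\ref{Def:BLO}, monotonicity of the square root, and Jensen's inequality. Write $G:=g(f)\geq0$ and $h:=G^2$. By Theorem~\ref{Thm:gbound}, $h\in\BLO(\R^n)$ with $\norm{h}_{\BLO(\R^n)}\leq C\norm{f}_{\BMO(\R^n)}^2$. In particular $h\in L^1_{\mathrm{loc}}(\R^n)$. Since $0\leq G\leq 1+h$, it follows that $G\in L^1_{\mathrm{loc}}(\R^n)$, so $G$ is an admissible candidate for Definition~\ref{Def:BLO}.

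Fix a ball $B$ and set $m_B:=\essinf_B h$, which is finite because $h<\infty$ almost everywhere. The map $u\mapsto\sqrt u$ is continuous and nondecreasing on $[0,\infty)$, as in the argument of Proposition~\ref{prop:nonlinear-stability}(ii). Hence
\[
\underset{B}{\essinf}\,G=\sqrt{m_B}.
\]
Next I will use the elementary inequality $\sqrt a-\sqrt b\leq\sqrt{a-b}$ for $a\geq b\geq0$, which follows from $(\sqrt b+\sqrt{a-b})^2\geq a$. Applied with $a=h(y)$ and $b=m_B$, it gives, for almost every $y\in B$,
\[
0\leq G(y)-\underset{B}{\essinf}\,G\leq\bigl(h(y)-m_B\bigr)^{1/2}.
\]

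Averaging over $B$ and applying Jensen's (or Cauchy--Schwarz) inequality to the concave function $\sqrt{\cdot}$ yields
\[
\frac1{|B|}\int_B\Bigl(G(y)-\underset{B}{\essinf}\,G\Bigr)dy
\leq\Bigl(\frac1{|B|}\int_B\bigl(h(y)-m_B\bigr)dy\Bigr)^{1/2}
\leq\norm{h}_{\BLO(\R^n)}^{1/2}
\leq C\norm{f}_{\BMO(\R^n)}.
\]
Taking the supremum over all balls $B$ gives the claim. No real obstacle is expected. The only points needing care are the identity $\essinf_B\sqrt h=\sqrt{\essinf_B h}$, which requires the square root to be continuous and monotone, and the local integrability of $G$; both are handled above.
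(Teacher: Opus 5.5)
Your proposal is correct and follows the same route as the paper: the pointwise bound $g(f)-\essinf_B g(f)\le\bigl([g(f)]^2-\essinf_B [g(f)]^2\bigr)^{1/2}$, then Jensen (Cauchy--Schwarz) on the average over $B$, then Theorem~\ref{Thm:gbound}. The only difference is that you make explicit three steps the paper leaves implicit: the local integrability of $g(f)$, the identity $\essinf_B\sqrt{h}=\sqrt{\essinf_B h}$, and the elementary inequality $\sqrt a-\sqrt b\le\sqrt{a-b}$.
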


\begin{proof}
For any ball $B\subset \R^n$,  we have
\begin{multline*}
{1\over |B|}\int_B \left(g(f)(x) - \underset{B}\essinf  g(f)\right) dx \leq {1\over |B|}\int_B \left( [g(f)(x)]^2 - \underset{B}\essinf  [g(f)]^2 \right)^{1/2}dx \\ \leq \left( C \|f\|_{\mathrm{BMO}(\mathbb{R}^n)}^2 \right)^{1/2} = C \|f\|_{\mathrm{BMO}(\mathbb{R}^n)}.
\end{multline*}
Taking supremum over balls $B$, we get the result.
\end{proof}

\section{Proof of Theorem \ref{Thm:heatC} and applications }\label{Sec:1.3}

In this section, we prove Theorem \ref{Thm:heatC}. We then use Theorem \ref{Thm:heatC} to establish the regularity of solutions to the heat equation.

\subsection{Proof of Theorem \ref{Thm:heatC}}
\begin{proof}[Proof of Theorem \ref{Thm:heatC}]
Assume that $f \in \text{BLO}(\R^n)$. We want to show that there exists $C>0$ such that
$$ \abs{W_tf(x)-\underset{z\in B(x,\sqrt t)} {\inf}W_tf(z)}\le C\norm{f}_{{\rm  BLO}(\R^n)},$$  for all $x \in \mathbb{R}^n$ and all $t>0$.
  The heat kernel ${W}_t(x-y)$ is rapidly decreasing and acts as a local average. For a fixed $x$ and $t$, the values ${W}_t f(z)$ for $z$ near $x$ are primarily influenced by the values of $f$ on a ball roughly of radius $r$ with $r\sim \sqrt{t}$.
Take two points $x, z \in \mathbb{R}^n$ with $|x-z| \leq \sqrt{t}$. Let us estimate the difference ${W}_t f(x)-{W}_t f(z)$. Observe that
    \begin{align*}
    {W}_t f(x)-{W}_t f(z) = \int_{\mathbb{R}^n} ({W}_t(x-y)-{W}_t(z-y)) f(y) dy= \int_{\mathbb{R}^n} ({W}_t(x-y)-{W}_t(z-y)) (f(y)-m)dy,
    \end{align*}
for any constant $m$.
The natural choice is to take $m$ to be the essential infimum of $f$ on a ball containing the support of the effective difference of the kernels. Note that $|{W}_t(x-y)-{W}_t(z-y)|$ is significant only when $y$ is in a ball of radius $C\sqrt{t}$ around $x$ (and hence around $z$). Let $B = B(x, \sqrt{t})$ and choose $m=\underset {2B } {\text{ess inf}}\ f$. 
Then,
    \begin{align*}
    &|{W}_t f(x) -{W}_t f(z)|\\
    &\leq \int_{2B} |{W}_t(x-y) - {W}_t(z-y)| |f(y) - \underset {2B } {{\essinf}}\ f| dy\\
    &\quad+\int_{(2B)^c} |{W}_t(x-y) - {W}_t(z-y)| |f(y) - \underset {2B } {{\essinf}}\ f| dy \\
    &=:I+II.
    \end{align*}
    For $I$, since  $\abs{\nabla_x W_t(x-y)}\le C t^{-1/2}W_t(x-y)$ and $|x-z|\le \sqrt t,$ by the mean value theorem, there exists $\xi$ between $x$ and $z$ such that
    \begin{equation*}
    I\le C{|x-z|\over \sqrt t} \int_{2B} W_t(\xi-y)|f(y)-\underset {2B} {{\essinf}}\ f| dy \le {C\over  t^{n/2}} \int_{2B}|f(y)-\underset {2B} {{\essinf}}\ f| dy\le C\norm{f}_{{\rm BLO}(\R^n)}.
    \end{equation*}
    For $II$, it follows from the definition of $W_t(x-y)$ that
    \begin{align*}
    II &\leq C{|x-z|\over \sqrt t} \int_{(2B)^c} W_t(\xi-y)|f(y)-\underset {2B} {{\essinf}}\ f| dy\\
    &\le C \sum_{k=1}^{+\infty}\int_{2^{k+1}B \backslash 2^{k}B} W_t(\xi-y)|f(y)-\underset {2B} {{\essinf}}\ f| dy\\
    &\le C t^{-n/2} \sum_{k=1}^{+\infty}e^{-c4^k}\int_{2^{k+1}B } |f(y)-\underset {2^{k+1}B} {{\essinf}}\ f| dy\\
    &\le  C t^{-n/2}\norm{f}_{{\rm BLO}(\R^n)} \sum_{k=1}^{+\infty}e^{-c4^k}\cdot (k+1)\cdot  \abs{2^{k+1}B }\\
    &\le C \norm{f}_{{\rm BLO}(\R^n)} \sum_{k=1}^{+\infty}e^{-c4^{k-1}}k 2^{kn} \\
    &\le C \norm{f}_{{\rm BLO}(\R^n)}.
    \end{align*}
    Combining the estimates of $I$ and $II$, we prove that
    $$|{W}_t f(x) -{W}_t f(z)|\le C\norm{f}_{{\rm BLO}(\R^n)}.$$
Since the inequality above holds for any $z \in B(x, \sqrt{t})$,
$$W_tf(x)-\underset {z\in B(x,\sqrt t)}{{\inf}} W_tf(z)\le C\norm{f}_{{\rm BLO}(\R^n)},$$
which is the desired result. Taking the supremum over $x$ and $t$ finishes the proof.
\end{proof}

\subsection{Regularity  and oscillation estimate of the solutions to the heat equation}

We can use the heat semigroup inequality for functions in BLO space as in Theorem \ref{Thm:heatC} to prove a regularity  of the solutions to the heat equation with BLO boundary values.

\begin{thm}\label{Thm:AppParabolic}
Let $f \in \mathrm{BLO}(\mathbb{R}^n)$.
Then $u(x,t)=W_t f(x)$ satisfies the following regularity estimate
$$
u(x,t) \leq \inf_{z \in B(x,\sqrt{t})} u(z,t) + C \|f\|_{\mathrm{BLO}(\R^n)}\quad \text{for all } x \in \mathbb{R}^n,\ t > 0,
$$
where $C > 0$ depends only on the dimension $n$.
\end{thm}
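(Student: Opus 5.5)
This statement is essentially a restatement of Theorem \ref{Thm:heatC} in PDE language, so the plan is to derive it directly from that theorem. With $u(x,t)=W_tf(x)$, Theorem \ref{Thm:heatC} gives, for every $t>0$,
\[
\norm{W_tf(\cdot)-\inf_{z\in B(\cdot,\sqrt t)}W_tf(z)}_{L^\infty(\R^n)}\le C\norm{f}_{\BLO(\R^n)} .
\]
Rearranged, this is exactly the asserted inequality, except that it holds for almost every $x$ rather than for every $x$.

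To upgrade to every $x$, I would argue in three steps.

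First, $u(\cdot,t)$ is well defined and finite everywhere. Since $f\in\BLO(\R^n)\subset\BMO(\R^n)$, the John--Nirenberg inequality together with the Gaussian decay gives $\int_{\R^n}W_t(x-y)|f(y)|\,dy<\infty$ for every $x$ and $t$.

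Second, $u(\cdot,t)$ is continuous, indeed smooth, in $x$. This follows by differentiating under the integral sign, which is justified by the bound $|\nabla_xW_t(x-y)|\le Ct^{-1/2}W_t(x-y)$ (up to a slightly wider Gaussian) and the same integrability.

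Third, I would close the gap between "almost every" and "every". The more robust route is simply to reread the proof of Theorem \ref{Thm:heatC}: it fixes an arbitrary $x$ and an arbitrary $z$ with $|x-z|\le\sqrt t$, and bounds $|W_tf(x)-W_tf(z)|\le C\norm{f}_{\BLO(\R^n)}$ pointwise. Taking the infimum over $z\in B(x,\sqrt t)$ then gives the claim for every $x$, with no exceptional set. Alternatively, one can pass to the limit: for any $x$, take $x_j\to x$ from the full-measure set, note that the infimum over $B(x_j,\sqrt t)$ tends to the infimum over $B(x,\sqrt t)$ by continuity of $u$ and openness of the balls, and conclude.

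Finally, the constant $C$ depends only on $n$. This is because the proof of Theorem \ref{Thm:heatC} uses only the dimensional constants in the heat kernel bounds and the BLO annular estimate $\int_{2^{k+1}B}\bigl(f-\underset{2^{k+1}B}{\essinf} f\bigr)\,dy\le |2^{k+1}B|\,\norm{f}_{\BLO(\R^n)}$.

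The only delicate point is the annular sum in that proof, where $f-\underset{2B}{\essinf} f$ is compared with $f-\underset{2^{k+1}B}{\essinf} f$ plus the difference of infima. That difference is at most of size $Ck\norm{f}_{\BLO(\R^n)}$ by the standard telescoping over dyadic dilates (each step being controlled by the BLO norm since $f_{2^jB}-\underset{2^{j}B}{\essinf} f\le\norm{f}_{\BLO(\R^n)}$ and $|f_{2^jB}-f_{2^{j+1}B}|\le 2^n\cdot 2\norm{f}_{\BLO(\R^n)}$). The resulting factor $k$ is absorbed by $e^{-c4^k}$. Granting this, which Theorem \ref{Thm:heatC} already provides, the result follows immediately.
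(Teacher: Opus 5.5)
Your proposal is correct and follows the paper's route: the estimate is read off directly from Theorem \ref{Thm:heatC}. Your extra steps fill in details the paper's proof passes over, namely upgrading the almost-everywhere bound to every $x$ (via the pointwise nature of the proof of Theorem \ref{Thm:heatC} or via continuity of $u(\cdot,t)$), checking that $C$ depends only on $n$, and supplying the $Ck\norm{f}_{\BLO(\R^n)}$ bound on the difference of infima in the annular sum.
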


\begin{proof}
The solution is given by the heat semigroup
$$
u(x,t) = W_t f(x) = \int_{\mathbb{R}^n} (4\pi t)^{-n/2} e^{-\frac{|x-y|^2}{4t}} f(y)\, dy.
$$
Since $f \in \mathrm{BLO}(\mathbb{R}^n)$, by Theorem \ref{Thm:heatC} we have
$$
\sup_{t > 0} \left\| W_t f(\cdot) - \underset{z \in B(\cdot,\sqrt{t})}{{\inf}}\, W_t f(z) \right\|_{L^\infty(\mathbb{R}^n)} \leq C \|f\|_{\mathrm{BLO}(\R^n)}.
$$
In particular, for any fixed $t > 0$ and a.e.  $x \in \mathbb{R}^n$, we have
$$
W_t f(x) - \inf_{z \in B(x,\sqrt{t})} W_t f(z) \leq C \|f\|_{\mathrm{BLO}(\R^n)} .
$$
Rewriting this in terms of $u(x,t)$, we obtain
$$
u(x,t) \leq \inf_{z \in B(x,\sqrt{t})} u(z,t) + C \|f\|_{\mathrm{BLO}(\R^n)},
$$
which is the desired estimate.
\end{proof}

By the theorem above, we can get an oscillation estimate of solutions to the heat equation with $\mathrm{BLO}$ boundary values in the following.
\begin{thm}\label{Thm:Harnack}
Under the same assumptions as in Theorem \ref{Thm:AppParabolic}, for any $x_0 \in \mathbb{R}^{n}$ and $t > 0$, we have
\[
\sup_{x\in B(x_0,\sqrt{t})} u(x,t) - \inf_{x\in B(x_0,\sqrt{t})} u(x,t) \leq C\|f\|_{\mathrm{BLO}(\mathbb{R}^{n})}.
\]
\end{thm}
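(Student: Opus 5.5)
The oscillation bound should follow from the pointwise two-point estimate established inside the proof of Theorem \ref{Thm:heatC}, rather than from its one-sided conclusion alone. That proof showed, for every $x\in\R^n$, $t>0$ and every $z$ with $|x-z|\le\sqrt t$,
\[
|W_tf(x)-W_tf(z)|\le C\norm{f}_{\BLO(\R^n)}.
\]
Two points $x,x'\in B(x_0,\sqrt t)$ can be at distance up to $2\sqrt t$. I will therefore pass through the center $x_0$, which lies within $\sqrt t$ of each of them. This gives
\[
u(x,t)-u(x',t)\le |W_tf(x)-W_tf(x_0)|+|W_tf(x_0)-W_tf(x')|\le 2C\norm{f}_{\BLO(\R^n)}.
\]
Taking the supremum over $x$ and the infimum over $x'$ yields the claimed inequality.

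An alternative route uses only Theorem \ref{Thm:AppParabolic} as stated. Apply it at the center $x_0$ to get
\[
u(x_0,t)\le \inf_{B(x_0,\sqrt t)}u(\cdot,t)+C\norm{f}_{\BLO(\R^n)},
\]
which controls $u(x_0,t)-\inf$. For $\sup-u(x_0,t)$, take $x\in B(x_0,\sqrt t)$; then $x_0\in B(x,\sqrt t)$. Applying Theorem \ref{Thm:AppParabolic} at $x$ gives
\[
u(x,t)\le \inf_{B(x,\sqrt t)}u(\cdot,t)+C\norm{f}_{\BLO(\R^n)}\le u(x_0,t)+C\norm{f}_{\BLO(\R^n)}.
\]
Adding the two bounds gives the result with constant $2C$.

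The main point to check is that Theorem \ref{Thm:AppParabolic} is stated with an $L^\infty$ norm, so strictly it holds only for a.e. $x$. However, $W_tf$ is continuous (indeed smooth) for $t>0$, since $f\in\BLO\subset\BMO$ has enough Gaussian integrability. So the inequality extends to every $x$, in particular to $x_0$, by continuity. The two-point estimate in the proof of Theorem \ref{Thm:heatC} was in fact derived pointwise for all $x$ and $z$, so the first route avoids this issue entirely. I will present that route as the main argument.
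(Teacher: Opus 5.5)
Your argument is correct and is essentially the paper's: the paper likewise chains through an intermediate point within $\sqrt t$ of both endpoints, taking the midpoint $z$ of $x$ and $y$ where you take the center $x_0$. It applies Theorem \ref{Thm:AppParabolic} at $z$ and at $x$ (and again for $y$) to get $|u(x,t)-u(z,t)|\le C\norm{f}_{\BLO(\R^n)}$, so your alternative route is the same idea with fewer applications. Your remark that the bound is really pointwise (via the two-point estimate in the proof of Theorem \ref{Thm:heatC}, or by continuity of $W_tf$) settles an a.e.-versus-everywhere point the paper passes over silently.
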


\begin{proof}
For any $x_0 \in \mathbb{R}^{n}$ and $t > 0$, denote $B = B(x_0, \sqrt{t})$. For any $x, y \in B$, let $z$ be the midpoint of $x$ and $y$.  Therefore, $x \in B(z, \sqrt{t})$ and $y \in B(z, \sqrt{t})$.

By Theorem \ref{Thm:AppParabolic}, we have
\[
u(z,t) \leq \inf_{w \in B(z,\sqrt{t})} u(w,t) + C\|f\|_{\mathrm{BLO}(\mathbb{R}^{n})}.
\]
Since $x \in B(z,\sqrt{t})$, we have $\displaystyle \inf_{w \in B(z,\sqrt{t})} u(w,t) \leq u(x,t)$. So
\begin{equation}\label{eq:1}
u(z,t) \leq u(x,t) + C\|f\|_{\mathrm{BLO}(\mathbb{R}^{n})}.
\end{equation}

Moreover, we apply Theorem \ref{Thm:AppParabolic} to the point $x$ and get
\[
u(x,t) \leq \inf_{w \in B(x,\sqrt{t})} u(w,t) + C\|f\|_{\mathrm{BLO}(\mathbb{R}^{n})}.
\]
Since $z \in B(x,\sqrt{t})$, we have $\underset{w \in B(x,\sqrt{t})}\inf u(w,t) \leq u(z,t)$. So
\begin{equation}\label{eq:2}
u(x,t) \leq u(z,t) + C\|f\|_{\mathrm{BLO}(\mathbb{R}^{n})}.
\end{equation}
From inequalities \eqref{eq:1} and \eqref{eq:2}, we obtain
\[
|u(x,t) - u(z,t)| \leq C\|f\|_{\mathrm{BLO}(\mathbb{R}^{n})}.
\]
Similarly, by applying Theorem \ref{Thm:AppParabolic} to $y$ and $z$, we get
\[
|u(y,t) - u(z,t)| \leq C\|f\|_{\mathrm{BLO}(\mathbb{R}^{n})}.
\]
Therefore,
\[
|u(x,t) - u(y,t)| \leq |u(x,t) - u(z,t)| + |u(z,t) - u(y,t)| \leq C\|f\|_{\mathrm{BLO}(\mathbb{R}^{n})}.
\]
Since this inequality holds for all $x, y \in B$, we conclude
\[
\sup_{x,y \in B} |u(x,t) - u(y,t)| \leq C\|f\|_{\mathrm{BLO}(\mathbb{R}^{n})},
\]
which implies
\[
\sup_{B} u(x,t) - \inf_{B} u(x,t) \leq C\|f\|_{\mathrm{BLO}(\mathbb{R}^{n})}.
\]
This completes the proof of the oscillation estimate.
\end{proof}

\section{Proof of Theorem \ref{HeatCbyWeight}}\label{Sec3}
In this section, we will give the proof of Theorem \ref{HeatCbyWeight}.
\begin{proof}
\text{(\it i)  $\Leftrightarrow$ (ii).} This is  the classical result in \cite{CR} which was developed by  Coifman and Rochberg.

\text{(\it ii) $\Rightarrow$ (iii).} Assume that there exists $\varepsilon > 0$ such that $e^{\varepsilon f} \in A_1$ with constant $0<C_1$ as in \eqref{eq:1.11}. The heat kernel is  an approximation of the identity, and there exists a constant $C_2 > 0$ such that for any $t > 0$,
\[
W_t(e^{\varepsilon f})(x) \leq C_2 M(e^{\varepsilon f})(x) \quad \text{for any } x \in \mathbb{R}^n.
\]
Combining with the $A_1$ condition, we get
\[
W_t(e^{\varepsilon f})(x) \leq C_2 M(e^{\varepsilon f})(x) \leq C_2 C_1 e^{\varepsilon f(x)}.
\]
Thus, $(iii)$ holds with $C = C_2 C_1$.

\text{(\it iii) $\Rightarrow$ (ii).}
Assume $(iii)$: there exist $\varepsilon > 0$ and $C > 0$ such that for all $t > 0$ and almost every $x \in \R^n$,
\[
W_t(e^{\varepsilon f})(x) \leq C e^{\varepsilon f(x)}.
\]
We will show that this implies $e^{\varepsilon f} \in A_1$.
Recall that the Hardy--Littlewood maximal function $M$ satisfies the pointwise inequality
\[
M(g)(x) \leq C_n \sup_{t > 0} W_t(g)(x) \qquad \text{for any nonnegative function } g,
\]
where $C_n$ is a dimensional constant. This follows because the heat kernel is radial, decreasing, and integrable, and such kernels dominate the characteristic function of balls up to a constant.
Applying this to $g = e^{\varepsilon f}$, we obtain
\[
M(e^{\varepsilon f})(x) \leq C_n \sup_{t > 0} W_t(e^{\varepsilon f})(x) \leq C_n\cdot C e^{\varepsilon f(x)}.
\]
Thus $e^{\varepsilon f} \in A_1$ with  constant at most $C_n\cdot C$. This proves $(ii).$
 This completes the proof of  Theorem \ref{HeatCbyWeight}.
\end{proof}

Theorem \ref{HeatCbyWeight}  also yields that we can get a quantitative version inequality as follows.
\begin{prop}\label{prop:quantitative}
There exist constants $c, C > 0$ depending only on the dimension $n$ such that for any $f \in \mathrm{BLO}(\mathbb{R}^n)$,
\[
c\|f\|_{\mathrm{BLO}(\mathbb{R}^n)} \leq N(f) \leq C\|f\|_{\mathrm{BLO}(\mathbb{R}^n)},
\]
where \[
N(f) := \inf \left\{ \frac{1}{\varepsilon} \log C_0 : \varepsilon > 0, \; C_0 > 1, \; \sup_{t>0} W_t(e^{\varepsilon f}) \leq C_0 e^{\varepsilon f} \text{ a.e.} \right\}.
\]
\end{prop}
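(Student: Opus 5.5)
The plan is to prove the two inequalities separately: the upper bound from Coifman--Rochberg plus Theorem \ref{HeatCbyWeight}, and the lower bound by localizing the heat condition to balls, which is where the real difficulty lies.

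\emph{Upper bound $N(f)\le C\|f\|_{\BLO(\R^n)}$.} We may assume $f$ is nonconstant, so $\|f\|_{\BLO(\R^n)}>0$. Set $\varepsilon=\alpha_n/\|f\|_{\BLO(\R^n)}$, where $\alpha_n$ is the John--Nirenberg exponent from Proposition \ref{prop:Gaussian-JN}. Fix a ball $B$. By the BLO John--Nirenberg inequality,
\[
\frac1{|B|}\int_B e^{\varepsilon f}\le A_n'\, e^{\varepsilon\,\essinf_B f}\le A_n' e^{\varepsilon f(x)}
\]
for a.e.\ $x\in B$. Taking the supremum over balls containing $x$ gives $M(e^{\varepsilon f})\le A_n' e^{\varepsilon f}$ a.e., so $e^{\varepsilon f}\in A_1$ with a \emph{dimensional} constant; this is the quantitative Coifman--Rochberg direction. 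Next, as in the step (ii)$\Rightarrow$(iii) of Theorem \ref{HeatCbyWeight}, $W_t g\le C_2 Mg$ for $g\ge0$, where $C_2$ is a dimensional constant; we may take $C_2\ge1$. Hence $C_0=\max(C_2A_n',2)$ is admissible in the definition of $N(f)$, and
\[
N(f)\le \frac{\|f\|_{\BLO(\R^n)}}{\alpha_n}\log C_0 .
\]

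\emph{Lower bound $c\|f\|_{\BLO(\R^n)}\le N(f)$.} Take any admissible pair $(\varepsilon,C_0)$. The first route is to localize to balls. Let $B=B(x,\sqrt t)$. For $x',y\in B$ we have $|x'-y|\le 2\sqrt t$, so $W_{4t}(x'-y)\ge (16\pi t)^{-n/2}e^{-1/4}$. Hence, for a.e.\ $x'\in B$,
\[
\frac1{|B|}\int_B e^{\varepsilon f}\le C_n W_{4t}(e^{\varepsilon f})(x')\le C_nC_0e^{\varepsilon f(x')} .
\]
Taking the essential infimum over $x'\in B$ and applying Jensen on the left gives
\[
f_B-\essinf_B f\le \frac{\log C_n+\log C_0}{\varepsilon}.
\]
This already shows $f\in\BLO(\R^n)$, but the additive term $\log C_n/\varepsilon$ is not controlled by $\log C_0/\varepsilon$ when $C_0$ is close to $1$. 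Rescaling $\varepsilon$ does not help: Hölder gives $W_t(e^{\delta f})\le C_0^{\delta/\varepsilon}e^{\delta f}$ for $\delta<\varepsilon$, which leaves the ratio $\log C_0/\varepsilon$ unchanged. Iterating the semigroup ($W_{kt}=W_t^k$) only raises $C_0$ to $C_0^k$, which is worse.

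\emph{Main obstacle and planned fix.} Removing this dimensional additive constant is the key step. I would first try to linearize the hypothesis. Jensen gives $e^{\varepsilon W_tf}\le W_t(e^{\varepsilon f})\le C_0e^{\varepsilon f}$, so
\[
W_tf(x')-f(x')\le \frac{\log C_0}{\varepsilon}=:N_0
\]
for all $t$ and a.e.\ $x'$. The aim is then to pass from this pointwise linear inequality to a bound on $f_B-\essinf_B f$. The idea is to compare the localized Gaussian average in Theorem \ref{Thm:heatC1.2}(iii) with $W_tf(x')$. The difficulty is that $f$ may lie below $\essinf_B f$ outside $B$, so the tail of $W_tf$ cannot simply be dropped. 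I would try to control that tail by the condition itself applied at larger scales, summing over dyadic annuli as in the proof of Theorem \ref{Thm:heatC1.2}.

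If this bootstrap fails, I would fall back on the observation that the ball argument gives $\|f\|_{\BLO(\R^n)}\le C\,(\log C_0)/\varepsilon$ whenever $C_0$ is bounded away from $1$, say $C_0\ge2$. Both $N(f)$ and $\|f\|_{\BLO(\R^n)}$ are positively homogeneous in $f$, so it would remain to show that admissible pairs with $C_0$ near $1$ can be traded for pairs with $C_0\ge2$ at comparable cost $\log C_0/\varepsilon$. That trade is exactly the step I expect to be hardest, and it may require restricting the infimum defining $N(f)$ to $C_0\ge2$.
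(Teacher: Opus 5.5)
Your upper bound is correct. Going directly through the BLO John--Nirenberg inequality is slightly shorter than the paper's route via Bennett's lemma and the BMO John--Nirenberg inequality; both give an $A_1$ constant depending only on $n$ with $\varepsilon\simeq\norm{f}_{\BLO(\R^n)}^{-1}$. The lower bound, however, cannot be completed, and the obstacle you isolated is not a technicality: the inequality $c\norm{f}_{\BLO(\R^n)}\le N(f)$ is false as stated.

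Take $n=2$ and $f(x)=-\log|x|$. This $f$ lies in $\BLO(\R^2)$, and $\norm{f}_{\BLO(\R^2)}\ge f_{B(0,1)}-\essinf_{B(0,1)}f=\tfrac12$. Averaging over circles and using $\frac1{2\pi}\int_0^{2\pi}\log|x-\rho e^{i\theta}|\,d\theta=\log\max(|x|,\rho)$ gives
\[
W_tf(x)=-\int_{\R^2}W_t(w)\log\max(|x|,|w|)\,dw\le f(x).
\]
So your linearized inequality holds with $N_0=0$, and no bootstrap from it can control $\norm{f}_{\BLO(\R^2)}$.

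More decisively, fix $(x,t)$ and set $\psi(\varepsilon)=\log W_t(e^{\varepsilon f})(x)-\varepsilon f(x)$. This function is convex, with $\psi(0)=0$ and $\psi'(0)=W_tf(x)-f(x)\le0$. Its second derivative $\psi''(s)$ is the variance of $\log|y|$ under the probability density proportional to $W_t(x-y)|y|^{-s}$. After the substitution $y=\sqrt t\,u$, this variance depends only on $x/\sqrt t$ and $s$. It is bounded uniformly for $s\in[0,1]$:
\begin{itemize}
\item for $|x|\lesssim\sqrt t$, by local integrability of $|u|^{-1}\log^2|u|$ in $\R^2$;
\item for $|x|\gg\sqrt t$, because the measure concentrates where $\log|u|$ oscillates by $O(\sqrt t/|x|)$, up to Gaussian-small tails.
\end{itemize}
Hence $\psi(\varepsilon)\le C\varepsilon^2$. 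So $(\varepsilon,e^{C\varepsilon^2})$ is admissible for every $\varepsilon\in(0,1]$, and $N(f)\le\inf_{\varepsilon}C\varepsilon=0$. Pairs with $C_0\downarrow1$ really do make $\varepsilon^{-1}\log C_0$ arbitrarily small, so the trade to $C_0\ge2$ you hoped for does not exist.

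The paper's own proof has exactly the same gap. It reaches your estimate $\norm{f}_{\BLO(\R^n)}\le\varepsilon^{-1}\log(C_0/c_n)$. It then absorbs the term $\varepsilon^{-1}\log(1/c_n)$ using a ``constant'' $c_n/\log C_0$, which depends on $C_0$ and degenerates as $C_0\to1$.

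Your fallback is the correct repair, but it must be built into the statement rather than proved. Restrict the infimum defining $N(f)$ to $C_0\ge2$ (or to $C_0\ge C_*$ for a fixed $C_*>1$). With that restriction, your ball argument gives $\norm{f}_{\BLO(\R^n)}\le\bigl(1+\log C_n/\log 2\bigr)\varepsilon^{-1}\log C_0$. Your upper bound is unaffected, since your choice $C_0=\max(C_2A_n',2)$ already satisfies the restriction.
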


\begin{proof}
Let us prove the upper bound $N(f) \leq C \|f\|_{\mathrm{BLO}(\mathbb{R}^n)}$ first.
Let $f \in \mathrm{BLO}(\mathbb{R}^n)$. By Bennett's lemma (Lemma 2 in [2]), there exist dimensional constants $c_1, c_2 > 0$ such that
\[
c_1 \|f\|_{\mathrm{BLO}(\mathbb{R}^n)} \leq \|\widetilde{M}f - f\|_{L^\infty(\mathbb{R}^n)} \leq c_2 \|f\|_{\mathrm{BLO}(\mathbb{R}^n)},
\]
where $\displaystyle \widetilde{M}f(x) = \sup_{B \ni x} \frac{1}{|B|} \int_B f(y) \, dy$.
Set $K := \|\widetilde{M}f - f\|_{L^\infty(\mathbb{R}^n)}$. Then for any ball $B = B(x, r)$,
\[
\frac{1}{|B|} \int_B f(y) \, dy \leq f(x) + K.
\]
Since $\mathrm{BLO}(\mathbb{R}^n) \subset \mathrm{BMO}(\mathbb{R}^n)$, we have $f \in \mathrm{BMO}(\mathbb{R}^n)$ with
\[
\|f\|_{\mathrm{BMO}(\mathbb{R}^n)} \leq 2 \|f\|_{\mathrm{BLO}(\mathbb{R}^n)}.
\]
Indeed, for any ball $B$,
\begin{multline*}
\frac{1}{|B|} \int_B |f - f_B| \leq \frac{1}{|B|} \int_B |f - \underset{ B} {\essinf}f| + |\underset{ B} {\essinf}f- f_B| \\\leq \|f\|_{\mathrm{BLO}(\mathbb{R}^n)} + \frac{1}{|B|} \int_B (f - \underset{ B} {\essinf} f) \leq 2 \|f\|_{\mathrm{BLO}(\mathbb{R}^n)}.
\end{multline*}
By the John--Nirenberg inequality for $\mathrm{BMO}$ functions, there exist constants $c_n, C_n > 0$ depending only on $n$ such that for any ball $B$,
\[
\frac{1}{|B|} \int_B \exp\left( \frac{c_n |f - f_B|}{\|f\|_{\mathrm{BMO}(\mathbb{R}^n)}} \right) \leq C_n.
\]
Choose $\varepsilon = \min\left( \frac{c_n}{2 \|f\|_{\mathrm{BLO}(\mathbb{R}^n)}}, \, \frac{1}{K} \right)$.
Because $\|f\|_{\mathrm{BMO}(\mathbb{R}^n)} \leq 2 \|f\|_{\mathrm{BLO}(\mathbb{R}^n)}$, we have $$\frac{c_n}{2 \|f\|_{\mathrm{BLO}(\mathbb{R}^n)}} \leq \frac{c_n}{\|f\|_{\mathrm{BMO}(\mathbb{R}^n)}}.$$ So $\varepsilon \leq \frac{c_n}{\|f\|_{\mathrm{BMO}(\mathbb{R}^n)}}$, and $\varepsilon K \leq 1$.
Now fix a ball $B = B(x, r)$. Since $f_B \leq f(x) + K$, we obtain
\[
\begin{aligned}
\frac{1}{|B|} \int_B e^{\varepsilon f(y)} \, dy
&\leq \frac{1}{|B|} \int_B e^{\varepsilon (f(y) - f_B)} e^{\varepsilon f_B} \, dy
\leq e^{\varepsilon f_B} \cdot \frac{1}{|B|} \int_B e^{\varepsilon |f(y) - f_B|} \, dy \\
&\leq e^{\varepsilon f_B} \cdot \frac{1}{|B|} \int_B \exp\left( \frac{c_n |f(y) - f_B|}{\|f\|_{\mathrm{BMO}}} \right) dy
\leq e^{\varepsilon f_B} \, C_n \\
&\leq C_n e^{\varepsilon (f(x) + K)}
= C_n e^{\varepsilon K} e^{\varepsilon f(x)} \leq C_n e \, e^{\varepsilon f(x)}.
\end{aligned}
\]
Taking the supremum over all balls containing $x$ yields
\[
 M(e^{\varepsilon f})(x) \leq C_n e \, e^{\varepsilon f(x)}.
\]
Hence, $e^{\varepsilon f} \in A_1$ with $A_1$ constant at most $C_n e$.
A standard comparison gives a constant $C'_n > 0$ (depending only on $n$) such that for any nonnegative function $g$,
\[
W_t g(x) \leq C'_n  M g(x) \quad \text{for all } t > 0, \; x \in \mathbb{R}^n.
\]
Applying this to $g = e^{\varepsilon f}$, we get
\[
\sup_{t > 0} W_t(e^{\varepsilon f})(x) \leq C'_n M(e^{\varepsilon f})(x) \leq C'_n C_n e \, e^{\varepsilon f(x)}.
\]
Thus, we have found $\varepsilon > 0$ and $C_0 = C'_n C_n e$ such that
\[
\sup_{t > 0} W_t(e^{\varepsilon f})(x) \leq C_0 e^{\varepsilon f}(x) \quad \text{a.e.}\ \  x\in \mathbb{R}^n.
\]
Note that $\varepsilon \geq \min\left( \frac{c_n}{2 \|f\|_{\mathrm{BLO}(\mathbb{R}^n)}}, \, \frac{1}{c_2 \|f\|_{\mathrm{BLO}(\mathbb{R}^n)}} \right)$ because $K \leq c_2 \|f\|_{\mathrm{BLO}(\mathbb{R}^n)}$.
Hence, there exists a constant $c' > 0$ (depending only on $n$) such that $\displaystyle \varepsilon \geq \frac{c'}{\|f\|_{\mathrm{BLO}(\mathbb{R}^n)}}$. Consequently,
\[
\frac{1}{\varepsilon} \log C_0 \leq \frac{\|f\|_{\mathrm{BLO}(\mathbb{R}^n)}}{c'} \log(C'_n C_n e) = C \|f\|_{\mathrm{BLO}(\mathbb{R}^n)}.
\]
Therefore, we get that  $$N(f) \leq C \|f\|_{\mathrm{BLO}(\mathbb{R}^n)}.$$

Now, we prove the lower bound $c \|f\|_{\mathrm{BLO}(\mathbb{R}^n)} \leq N(f)$.
 Assume that for some \(\varepsilon>0\) and \(C_0>1\) we have
\begin{equation}\label{assumption}
\sup_{t>0}W_t(e^{\varepsilon f})(x)\le C_0e^{\varepsilon f(x)}\qquad\text{for a.e. }x\in\R^n.
\end{equation}
Let \( B \) be a ball of radius \( r \). Set \( t = r^2 \). For  a.e.  \( x \in B \),  there exists \(0<c_n<1\) such that for all \( y \in B \),
\[
(4\pi t)^{-n/2} e^{-|x-y|^2/(4t)} \geq c_n |B|^{-1},
\]
since \( |x-y| \leq 2r = 2\sqrt{t} \).
Then for any \( x \in B \),
\begin{equation}\label{7}
W_t(e^{\varepsilon f})(x)
\geq \int_B (4\pi t)^{-n/2} e^{-|x-y|^2/(4t)} e^{\varepsilon f(y)} dy\\
\geq c_n |B|^{-1} \int_B e^{\varepsilon f(y)} dy.
\end{equation}
Combining \eqref{assumption} and \eqref{7} gives
\begin{align}\label{8}
c_n |B|^{-1} \int_B e^{\varepsilon f(y)} dy \leq C_0 e^{\varepsilon f(x)} \quad \text{a.e.}\ \  x \in B.
\end{align}
Let \( a_B = \essinf_B f \). Then \eqref{8} implies
\begin{align}\label{9}
c_n |B|^{-1} \int_B e^{\varepsilon (f(y) - a_B)} dy \leq C_0 e^{\varepsilon (f(x) - a_B)} \quad \text{a.e.}\ \  x \in B.
\end{align}
Since \( a_B \) is the essential infimum of $f$ over $B$, for any \( \delta > 0 \) there exists $B_\delta\subset B$ such that $|B_\delta|>0$, and for any \( x_\delta \in B_\delta \) we have  \( f(x_\delta) - a_B \leq \delta \). Applying \eqref{9} to \( x_\delta \),
\[
c_n |B|^{-1} \int_B e^{\varepsilon (f(y) - a_B)} dy \leq C_0 e^{\varepsilon \delta}.
\]
Letting \( \delta \to 0^+ \) yields
\[
|B|^{-1} \int_B e^{\varepsilon (f(y) - a_B)} dy \leq \frac{C_0}{c_n}.
\]
By Jensen's inequality,
\[
\exp\left( \varepsilon |B|^{-1} \int_B (f(y) - a_B) dy \right) \leq |B|^{-1} \int_B e^{\varepsilon (f(y) - a_B)} dy \leq \frac{C_0}{c_n}.
\]
Taking logarithms,
\[
\varepsilon |B|^{-1} \int_B (f(y) - a_B) dy \leq \log\left( \frac{C_0}{c_n} \right).
\]
Hence,
\[
|B|^{-1} \int_B (f(y) - a_B) dy \leq \frac{1}{\varepsilon} \log\left( \frac{C_0}{c_n} \right).
\]
Since this holds for every ball \( B \),
\[
\|f\|_{\BLO(\mathbb{R}^n)} \leq \frac{1}{\varepsilon} \log\left( \frac{C_0}{c_n} \right).
\]
The pair \( (\varepsilon, C_0) \) is arbitrary satisfying \eqref{assumption}. Taking the infimum over all such pairs, we obtain
\[
\|f\|_{\BLO(\mathbb{R}^n)} \leq \frac{1}{c} N(f),
\]
where \( c > 0 \) depends only on \( n \). More precisely, \( N(f) \geq c_n' \|f\|_{\BLO(\mathbb{R}^n)} \) with \( c_n' = c_n / \log(C_0) \).
\end{proof}

\section{Non-linearity of ${\rm BLO}(\mathbb R^n)$}\label{Sec:4}

In this section, we give a simple example of functions in ${\rm BLO}(\mathbb R^n)$ with $n=1$.

\begin{prop}\label{p-BLO examp}
Let \begin{align*}
f(x)=
\begin{cases}
-\ln|x|,&\ x\ne 0,\\
0,&\ x=0.
\end{cases}
\end{align*} Then $f \in {\rm BLO}(\mathbb R)$
while $-f\notin {\rm BLO}(\mathbb R)$.

\end{prop}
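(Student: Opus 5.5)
To prove that $f(x)=-\ln|x|$ lies in $\BLO(\R)$, I will fix an interval $B=(a-r,a+r)$ and bound $\frac1{|B|}\int_B f-\essinf_B f$ by an absolute constant. Since $f$ is even and decreasing in $|x|$, the essential infimum is attained at the point of $\overline B$ farthest from $0$. Thus $\essinf_B f=-\ln(|a|+r)$, and the quantity to control is
\[
\frac1{2r}\int_{a-r}^{a+r}\ln\frac{|a|+r}{|y|}\,dy .
\]
By symmetry I may assume $a\ge0$. I will split into two cases.

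\emph{Case $a\le 2r$.} Here $|a|+r\le 3r$ and $B\subset(-3r,3r)$, and the integrand is nonnegative on $B$. Hence the average is at most
\[
\frac1{2r}\int_{-3r}^{3r}\ln\frac{3r}{|y|}\,dy=3\cdot\frac1{3r}\int_0^{3r}\ln\frac{3r}{y}\,dy=3,
\]
where I used $\int_0^1\ln(1/u)\,du=1$.

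\emph{Case $a>2r$.} Then every $y\in B$ satisfies $|y|\ge a-r> (a+r)/3$. So the integrand is at most $\ln 3$ and the average is at most $\ln3$.

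These two cases give $\norm{f}_{\BLO(\R)}\le 3$. Local integrability of $f$ is clear, and the value of $f$ at $0$ is irrelevant since it is a null set.

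For $-f=\ln|x|$, I will take $B=(-r,r)$ for any $r>0$. Then $\essinf_B(-f)=-\infty$, because $\ln|x|\to-\infty$ as $x\to0$ on sets of positive measure. Meanwhile $\frac1{|B|}\int_B\ln|y|\,dy=\ln r-1$ is finite, so the BLO integrand $(-f)-\essinf_B(-f)$ is $+\infty$ on all of $B$. Hence the supremum in Definition~\ref{Def:BLO} is infinite. If one prefers a finite witness, I will use the equivalent reasoning that for every $M$ the set where $-f<-M$ has positive measure in $B$. Alternatively, one can note that $\BLO$ functions must satisfy $\essinf_B f>-\infty$ on every ball, which follows from $f-\essinf_B f\in L^1(B)$ together with $f\in L^1_{\rm loc}$. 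Therefore $-f\notin\BLO(\R)$.

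The only step needing care is the case analysis for the first claim, in particular identifying $\essinf_B f$ correctly when $B$ contains or is near the origin. The rest is elementary computation.
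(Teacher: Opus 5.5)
Your proof is correct, and for the membership $f\in\BLO(\R)$ it takes a different route from the paper; for $-f\notin\BLO(\R)$ your argument is the paper's, namely that $\essinf$ of $\ln|x|$ over any interval containing $0$ is $-\infty$.

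The paper computes $f_I$ exactly via the antiderivative $x\ln x-x$. It splits according to where the endpoints of $I=(a,b)$ lie relative to $0$. If both endpoints are on one side, it gets $f_I-\essinf_I f\le 1$. If $I$ straddles $0$, it gets $\le 2$ using $\ln s/(1+s)\le 1$.

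You never evaluate an average. You use only three things: that $f$ is decreasing in $|x|$, the scale invariance $\frac1R\int_0^R\ln(R/y)\,dy=1$, and a near/far dichotomy comparing the center's distance from $0$ with the radius ($a\le 2r$ versus $a>2r$). This gives the cruder bound $3$. The paper's computation buys sharper constants and an explicit formula. Your comparison argument is shorter, avoids the endpoint bookkeeping, and carries over essentially verbatim to $-\ln|x|$ on $\R^n$ by replacing $(-3r,3r)$ with $B(0,3r)$.

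One small point of precision in your near case. Enlarging the domain of integration from $B$ to $(-3r,3r)$ uses the nonnegativity of $\ln(3r/|y|)$ on $(-3r,3r)$, not the nonnegativity of the original integrand on $B$. Both facts hold, so the bound stands.
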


\begin{proof} For any $I=(a, b)$ with $-\infty<a<0<b<\infty$,
$$
\underset{I} {\essinf} \ (-f)=-\infty.$$
Thus $-f \notin {\rm BLO}(\mathbb R)$. Now we prove $f \in {\rm BLO}(\mathbb R)$. To this end, it suffices to show that
there exists a positive constant $C$ such that for any interval $I$,
\begin{equation}\label{e-blo defn}
f_I-\underset{I} {\essinf} \ f\le C.
\end{equation}

For any given interval $I=(a, b)$,  we consider the following  three cases.
\begin{itemize}
  \item [(i)] $0 \leq a<b<\infty$. In this case,
$$
f_I  =\frac{1}{|I|} \int_a^b-\ln x d x
=\frac{-1}{b-a}[b \ln b-a \ln a-(b-a)]  =1-\frac{b \ln b-a \ln a}{b-a}.
$$
Since $\underset{I} {\essinf} \ f=f(b)=-\ln b$, observe that if $a=0$, then
$$ f_I-\underset{I} {\essinf} \ f=1.$$
On the other hand, for $a>0$, by $b/a>1$, we conclude that
 \begin{align*}
f_I-\underset{I} {\essinf} \ f&=1-\frac{b \ln b-a \ln a-(b-a)\ln b}{b-a} =1-\frac{a(\ln b-\ln a)}{b-a} =1-\frac{\ln \frac b a}{b / a-1}<1.
\end{align*}
  \item [(ii)]  $a<b\le 0$. In this case, by similarity we also have that
$$ f_I-\underset{I} {\essinf} \ f\le 1.$$
  \item  [(iii)] $a<0<b$. Observe that
\begin{align*}
f_I&=\frac{1}{b-a}\left[\int_a^0+\int_0^b\right](-\ln |x|) d x =\frac{1}{b-a}[a\ln (-a)-a-b \ln b+b]  =1-\frac{b \ln b-a \ln (-a)}{b-a}.
\end{align*}
If $|a|>b$, then $\underset{I} {\essinf} \ f=f(a)=-\ln (-a)$, which implies

\begin{align*}
f_I-\underset{I} {\essinf} \ f & =1-\frac{b \ln b-a \ln (-a)-(b-a)\ln (-a)}{b-a}  =1-\frac{b \ln \frac{b}{-a}}{b-a}  =1+\frac{\ln \frac{-a}{b}}{1 +\frac{-a}{b}}\leq 2.
\end{align*}
Similarly, if $|a| \leq b$, then $\underset{I} {\essinf} \ f=f(b)=-\ln b.$ This yields
\begin{align*}
f_I-\underset{I} {\essinf} \ f& =1-\frac{b \ln b-a \ln (-a)-(b-a)\ln b}{b-a}
=1+\frac{\ln \frac{b}{-a}}{\frac{b}{-a}+1}
 \leq 2.
\end{align*}
\end{itemize}

By summarizing the three cases above, we conclude that \eqref{e-blo defn} holds, which completes the proof of Proposition \ref{p-BLO examp}.
\end{proof}

The above example in the proposition shows that the $\mathrm{BLO}$ space is not a linear space. In particular,  when $f,g\in \mathrm{BLO}(\R^n)$, $-f$  and $f-g$ may not in $\mathrm{BLO}(\R^n)$.

\vspace{1em}

\noindent{\bf \Large Declarations}

\vspace{1em}

\noindent{\bf Ethical Approval.} Not applicable.

\vspace{1em}
\noindent{\bf Competing interests.}
The authors declare that there is no conflict of interest regarding the publication of this article.

\vspace{1em}
\noindent{\bf Authors' contributions.} The authors contributed equally on this manuscript.



\vspace{3em}

\end{document}